\documentclass[12pt]{article}

\usepackage[english]{babel}

\usepackage[letterpaper,top=2cm,bottom=2cm,left=3cm,right=3cm,marginparwidth=1.75cm]{geometry}

\usepackage{amsmath, amsthm, amsfonts, amssymb}
\usepackage{graphicx}
\usepackage[colorlinks=true, allcolors=blue]{hyperref}

\usepackage{verbatim, enumitem}
\usepackage{tikz}
\usetikzlibrary{arrows.meta}
\newtheorem{theorem}{Theorem}
\newtheorem{proposition}[theorem]{Proposition}
\newtheorem{lemma}[theorem]{Lemma}

\newtheorem*{theorem*}{Theorem}

\theoremstyle{definition}

\newtheorem{example}[theorem]{Example}

\newcommand{\gau}[2]{\left[ \begin{array}{c} #1 \\ #2 \end{array} \right]}
\usepackage{amsmath}
\usepackage{graphicx}
\DeclareMathOperator{\ssum}{sum}

\allowdisplaybreaks
\title{Cyclic Sieving Phenomenon for Independent sets of graphs}

\author{Jacob A White}

\begin{document}
\maketitle

\begin{abstract}
In this paper, we present examples of the cyclic sieving phenomenon coming from studying independent sets in graphs of a fixed size $k$. Given a graph $G$, and a cyclic group $C$ acting on the graph, then $C$ also acts on the collection of $k$-independent sets of G.
We exhibit cyclic sieving phenomena for a cyclic group acting on the collection of $k$-independent sets of powers of cycle graphs. As a corollary, we also find a closed formula for the number of independent sets of size $k$ in the power of a cycle graph, and in the power of a path. We also show how the graph construction of whiskering can be used to obtain new cyclic sieving phenomena from old phenomena. We also discuss recursive techniques to exhibit cyclic sieving phenomena for the independent sets of gear graphs, helm graphs, and book graphs. 
\end{abstract}

\section{Introduction}

Let $\mathfrak{C}_n$ be the cyclic group of order $n.$ Suppose that $\mathfrak{C}_n$ acts on a set $X$. For $\mathfrak{g} \in \mathfrak{C}_n$, we let $X^{\mathfrak{g}} = \{x \in X: \mathfrak{g}x=x \}.$
Given a set $X$ and a polynomial $f(q)$, we say that $(X, \mathfrak{C}_n, f(q))$ exhibits the cyclic sieving phenomenon (abbreviated CSP) if $\mathfrak{C}_n$ acts on $X$, and for any $\mathfrak{g} \in \mathfrak{C}_n$ of order $d$, and any primitive $d$th root of unity $\omega$, we have $|X^{\mathfrak{g}}| = f(\omega)$. The cyclic sieving phenomenon was first introduced in \cite{CSP1}, and many examples of CSPs appear in the literature. An excellent survey is given in \cite{CSP2}.

 In this paper we study many natural examples of cyclic sieving coming from graph theory. Throughout this paper, we only focus on simple graphs, with no loops or multiple edges. We let $V(G)$ denote the vertex set of a graph $G$.
 
Given a graph $G$, an automorphism $\mathfrak{g}: V(G) \to V(G)$ is a bijection which preserves the adjacency relation of $G$. We say that a group $\mathfrak{G}$ acts on $G$ if there is an action of $\mathfrak{G}$ on $V(G)$ that preserves the adjacency relation. We let $\mathcal{I}_k(G)$ denote the collection of independent sets of $G$ of size $k$. Given any automorphism $\mathfrak{g}$, and any $S \in \mathcal{I}_k(G)$, we observe that $\mathfrak{g}S = \{\mathfrak{g}v: v \in S \}$ is also an element of $\mathcal{I}_k(G).$ Thus if $\mathfrak{G}$ is a group that acts on $G$, then there is an induced action of $\mathfrak{G}$ on $\mathcal{I}_k(G).$ 

In particular, if a cyclic group $\mathfrak{C}$ acts on a graph $G$, then it also acts on $\mathcal{I}_k(G).$ We are interested in finding polynomials $i_k(q)$ such that $(\mathcal{I}_k(G), \mathfrak{C}, i_k(q))$ exhibits the CSP. We present several results in this regard.

Our first theorem concerns powers of cycles graphs. Given a positive integer $n$, we let $[0,n] = \{0,1,\ldots, n\}$. Given a positive integer $r$, we let $C_n^r$ be the $r$th power of the cycle graph. This graph has vertex set $[0,n-1]$ and edges $j \sim j+\ell \mbox{ mod } n$, for all $j \in [0,n-1]$ and all $\ell \in [1,r].$ We define $C_n^0$ to be the edgeless graph. The case $r=1$ is the usual cycle graph, and $r=2$ is the anti-prism graph. Several examples of $C_n^r$ are shown in Figure \ref{fig:powercycle}.

Let $\mathfrak{C}_n$ be the cyclic group of order $n$, with generator $\mathfrak{g}$. For $i \in [0,n-1] = \{0, \ldots, n-1 \}$ we define $\mathfrak{g}(i) = (i+1) \mbox{ mod } n$. This gives rise to an action of $\mathfrak{C}_n$ on $[0,n-1]$. This action preserves the adjacency relation for $C_n^r$, and hence $\mathfrak{C}_n$ acts on $\mathcal{I}_k(C_n^r)$ for any $k.$ We show that $(\mathcal{I}_k(C_n^r), \mathfrak{C}, i_{k,r,n}(q))$ exhibits the CSP for the polynomial $i_{k,r,n}(q)$ defined in Theorem \ref{thm:main}. This Theorem is proven in Section \ref{sec:main}.

Given an integer $n$, we let $[n] = 1 + q + \cdots + q^{n-1}$, and let $\gau{n}{k}$ denote the $q$-binomial coefficient. 
\begin{theorem}
Let $n$ be a positive integer and let $\mathfrak{C}_n$ be a cyclic group of order $n$ acting on $[0,n-1]$ by rotation. Let $r$ be a nonnegative integer and let $k$ be a positive integer with $(r+1)k \leq n$. 
Define $i_{k,r,n}(q) = \frac{[n]}{[n-rk]}\gau{n-rk}{k}$, which is a polynomial in $q.$
Then \[\left(\mathcal{I}_k(C_n^r), \mathfrak{C}_n, i_{k,r,n}(q)\right)\] exhibits the cyclic sieving phenomenon.
\label{thm:main}
\end{theorem}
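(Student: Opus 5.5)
Fix an element of $\mathfrak{C}_n$ of order $d$ with $d \mid n$, say $\mathfrak{g}^{n/d}$, and a primitive $d$th root of unity $\omega$. I will compute both sides of the required identity $|\mathcal{I}_k(C_n^r)^{\mathfrak{g}^{n/d}}| = i_{k,r,n}(\omega)$ separately and check that they agree.

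\textbf{The combinatorial side.} An independent set $S$ of size $k$ in $C_n^r$ is a $k$-subset of $\mathbb{Z}/n$ in which consecutive elements (cyclically) are at distance at least $r+1$. Equivalently, $S$ is determined by a starting element together with a cyclic gap sequence $(g_1,\dots,g_k)$ with each $g_i \ge r+1$ and $\sum g_i = n$.

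First I will derive the closed count
\[ |\mathcal{I}_k(C_n^r)| = \frac{n}{n-rk}\binom{n-rk}{k}. \]
The derivation is a double count of pairs (set, marked element). There are $k|\mathcal{I}_k|$ such pairs. They correspond to choosing a start $s \in [0,n-1]$ together with a composition of $n$ into $k$ parts, each at least $r+1$. The number of such compositions is $\binom{n-rk-1}{k-1}$. Hence $|\mathcal{I}_k| = \frac{n}{k}\binom{n-rk-1}{k-1}$, which equals the displayed formula.

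Next, $S$ is fixed by $\mathfrak{g}^{n/d}$ exactly when $S$ is invariant under translation by $n/d$. Then $d \mid k$, and $S$ is the preimage of a set $S' \subseteq \mathbb{Z}/(n/d)$ of size $k/d$. The distance condition is local, so it is preserved: $S'$ is independent in $C_{n/d}^r$ whenever $n/d \ge (r+1)k/d$, which holds automatically. One edge case needs care: when $k/d = 1$, the set $S$ is a single orbit, and independence requires $n/d \ge r+1$, which again holds. So, for $d \mid k$, the fixed-point count is
\[ \frac{n/d}{(n-rk)/d}\binom{(n-rk)/d}{k/d}, \]
and it is $0$ when $d \nmid k$.

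\textbf{The algebraic side.} Write
\[ i_{k,r,n}(q) = \frac{[n]}{[n-rk]}\gau{n-rk}{k}. \]
I will evaluate this at $q = \omega$ using the standard $q$-Lucas fact: for $d \mid m$, $\gau{m}{k}_{q=\omega} = \binom{m/d}{k/d}$ if $d \mid k$, and $0$ otherwise.

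If $d \mid k$, then $d \mid n - rk$. Both $[n]$ and $[n-rk]$ vanish at $\omega$, so I take the limit, which gives the ratio $n/(n-rk)$. This is legitimate because
\[ \frac{[n]}{[n-rk]} = \frac{1-q^n}{1-q^{n-rk}}, \]
and L'Hôpital (or factoring $1-q^{d}$ out of both) gives $\frac{n}{n-rk}$. That value matches the fixed-point count.

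If $d \nmid k$, the answer must be $0$. The delicate subcase is when $d \nmid n-rk$. Then $[n-rk](\omega) \neq 0$ and $[n](\omega)=0$, so the value is $0$, as required. In the other subcase, $d \mid n-rk$ but $d \nmid k$, the $q$-Lucas fact makes the binomial vanish while the prefactor has the finite limit $n/(n-rk)$, so the value is again $0$.

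\textbf{Polynomiality and the expected obstacle.} Separately, I must justify that $i_{k,r,n}(q)$ is a polynomial. I would write it as $\frac{[n]}{[k]}\gau{n-rk-1}{k-1}$. Then I would argue via cyclotomic factors: each $\Phi_e$ with $e \mid k$ divides $[n]\gau{n-rk-1}{k-1}$. Here I will count multiplicities with $\lfloor\cdot\rfloor$ differences, comparing the cases $e \mid n$ and $e \nmid n$.

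I expect this polynomiality check, together with cleanly handling the limit evaluation when a $\Phi_d$ factor cancels, to be the main technical obstacle. The fixed-point bijection itself is straightforward.
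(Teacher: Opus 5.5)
Your argument is correct. It differs from the paper's mainly in how the polynomial side is handled. Your fixed-point analysis is essentially the paper's quotient-graph argument via $C_n^r/\mathfrak{g}\simeq C_{n/d}^r$: invariant sets are preimages of independent $(k/d)$-sets in $C_{n/d}^r$, which is valid because $n/d\ge r+1$ rules out edges inside a translation orbit.

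The paper never counts $\mathcal{I}_k(C_n^r)$ directly. It introduces the statistic $\ssum_r(A)=\sum_{a\in A}a-(r+1)\binom{k}{2}$ and proves $i_k(P_n^r,q)=\gau{n-rk+r}{k}$ by a shifting bijection. It then splits $\mathcal{I}_k(C_n^r)$ according to which element of $[0,r-1]$, if any, lies in $A$, obtaining $\sum_{A}q^{\ssum_r(A)}=\frac{[n]}{[n-rk]}\gau{n-rk}{k}$. This gives polynomiality (with nonnegative coefficients), the count at $q=1$, and a formula for path powers all at once. The paper then evaluates at $\omega$ by counting vanishing $q$-integers in $\frac{[n]}{[k]}\prod_{j=1}^{k-1}\frac{[n-rk-k+j]}{[j]}$.

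Your marked-element/composition double count is shorter and purely enumerative. Your evaluation via $q$-Lucas on $\gau{n-rk}{k}$ together with $\frac{1-q^n}{1-q^{n-rk}}\to\frac{n}{n-rk}$ is more transparent than the zero-counting. The price is that you must prove polynomiality separately, and your cyclotomic plan does go through. Since $[k]$ is squarefree, it suffices to handle each $\Phi_e$ with $e\mid k$, $e>1$. If $e\mid n$, then $\Phi_e$ divides $[n]$. If $e\nmid n$, write $n\equiv s\pmod{e}$ with $1\le s\le e-1$; then the multiplicity of $\Phi_e$ in $\gau{n-rk-1}{k-1}$ is $\lfloor\frac{n-rk-1}{e}\rfloor-\lfloor\frac{k-1}{e}\rfloor-\lfloor\frac{n-rk-k}{e}\rfloor=1$.

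Two cosmetic points. For $d=1$ the $q$-integers $[n]$ and $[n-rk]$ do not vanish at $q=1$, though your $\frac{1-q^n}{1-q^{n-rk}}$ form still gives the right value. Working only with $\mathfrak{g}^{n/d}$ is justified because every element of order $d$ generates the same subgroup and so has the same fixed sets.
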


\begin{figure}
\begin{center}
\begin{tikzpicture}[scale=1]

\pgfmathtruncatemacro{\n}{4} 

\def\radius{1}               

\foreach \i in {0,...,\numexpr\n-1} {
        \coordinate (P\i) at ({\radius*cos(360*\i/\n)}, {\radius*sin(360*\i/\n)});
        \node at (P\i) {}; 
}

\foreach \i in {0,...,\numexpr\n-1} {
    \fill (P\i) circle (3pt) node[shift={(0,0.2)}] {};
}

\end{tikzpicture}
\hspace{1cm}
\begin{tikzpicture}[scale=1]

\pgfmathtruncatemacro{\n}{4} 
\pgfmathtruncatemacro{\r}{1} 

\def\radius{1}               

\foreach \i in {0,...,\numexpr\n-1} {
        \coordinate (P\i) at ({\radius*cos(360*\i/\n)}, {\radius*sin(360*\i/\n)});
        \node at (P\i) {}; 
}
\foreach \i in {0,...,\numexpr\n-1} {
    \foreach \j in {1,...,\numexpr\r} {
        \pgfmathtruncatemacro{\k}{mod(\i+\j,\n)} 
        \draw (P\i) -- (P\k);
    }
}

\foreach \i in {0,...,\numexpr\n-1} {
    \fill (P\i) circle (3pt) node[shift={(0,0.2)}] {};
}

\end{tikzpicture}
\hspace{1cm}
\begin{tikzpicture}[scale=1]

\pgfmathtruncatemacro{\n}{4} 
\pgfmathtruncatemacro{\r}{2} 

\def\radius{1}               

\foreach \i in {0,...,\numexpr\n-1} {
        \coordinate (P\i) at ({\radius*cos(360*\i/\n)}, {\radius*sin(360*\i/\n)});
        \node at (P\i) {}; 
}
\foreach \i in {0,...,\numexpr\n-1} {
    \foreach \j in {1,...,\numexpr\r} {
        \pgfmathtruncatemacro{\k}{mod(\i+\j,\n)} 
        \draw (P\i) -- (P\k);
    }
}

\foreach \i in {0,...,\numexpr\n-1} {
    \fill (P\i) circle (3pt) node[shift={(0,0.2)}] {};
}

\end{tikzpicture}

\begin{tikzpicture}[scale=1]

\pgfmathtruncatemacro{\n}{6} 

\def\radius{1}               

\foreach \i in {0,...,\numexpr\n-1} {
        \coordinate (P\i) at ({\radius*cos(360*\i/\n)}, {\radius*sin(360*\i/\n)});
        \node at (P\i) {}; 
}

\foreach \i in {0,...,\numexpr\n-1} {
    \fill (P\i) circle (3pt) node[shift={(0,0.2)}] {};
}

\end{tikzpicture}
\hspace{1cm}
\begin{tikzpicture}[scale=1]

\pgfmathtruncatemacro{\n}{6} 
\pgfmathtruncatemacro{\r}{1} 

\def\radius{1}               

\foreach \i in {0,...,\numexpr\n-1} {
        \coordinate (P\i) at ({\radius*cos(360*\i/\n)}, {\radius*sin(360*\i/\n)});
        \node at (P\i) {}; 
}
\foreach \i in {0,...,\numexpr\n-1} {
    \foreach \j in {1,...,\numexpr\r} {
        \pgfmathtruncatemacro{\k}{mod(\i+\j,\n)} 
        \draw (P\i) -- (P\k);
    }
}

\foreach \i in {0,...,\numexpr\n-1} {
    \fill (P\i) circle (3pt) node[shift={(0,0.2)}] {};
}

\end{tikzpicture}
\hspace{1cm}
\begin{tikzpicture}[scale=1]

\pgfmathtruncatemacro{\n}{6} 
\pgfmathtruncatemacro{\r}{2} 

\def\radius{1}               

\foreach \i in {0,...,\numexpr\n-1} {
        \coordinate (P\i) at ({\radius*cos(360*\i/\n)}, {\radius*sin(360*\i/\n)});
        \node at (P\i) {}; 
}
\foreach \i in {0,...,\numexpr\n-1} {
    \foreach \j in {1,...,\numexpr\r} {
        \pgfmathtruncatemacro{\k}{mod(\i+\j,\n)} 
        \draw (P\i) -- (P\k);
    }
}

\foreach \i in {0,...,\numexpr\n-1} {
    \fill (P\i) circle (3pt) node[shift={(0,0.2)}] {};
}

\end{tikzpicture}

\begin{tikzpicture}[scale=1]

\pgfmathtruncatemacro{\n}{8} 

\def\radius{1}               

\foreach \i in {0,...,\numexpr\n-1} {
        \coordinate (P\i) at ({\radius*cos(360*\i/\n)}, {\radius*sin(360*\i/\n)});
        \node at (P\i) {}; 
}

\foreach \i in {0,...,\numexpr\n-1} {
    \fill (P\i) circle (3pt) node[shift={(0,0.2)}] {};
}

\end{tikzpicture}
\hspace{1cm}
\begin{tikzpicture}[scale=1]

\pgfmathtruncatemacro{\n}{8} 
\pgfmathtruncatemacro{\r}{1} 

\def\radius{1}               

\foreach \i in {0,...,\numexpr\n-1} {
        \coordinate (P\i) at ({\radius*cos(360*\i/\n)}, {\radius*sin(360*\i/\n)});
        \node at (P\i) {}; 
}
\foreach \i in {0,...,\numexpr\n-1} {
    \foreach \j in {1,...,\numexpr\r} {
        \pgfmathtruncatemacro{\k}{mod(\i+\j,\n)} 
        \draw (P\i) -- (P\k);
    }
}

\foreach \i in {0,...,\numexpr\n-1} {
    \fill (P\i) circle (3pt) node[shift={(0,0.2)}] {};
}

\end{tikzpicture}
\hspace{1cm}
\begin{tikzpicture}[scale=1]

\pgfmathtruncatemacro{\n}{8} 
\pgfmathtruncatemacro{\r}{2} 

\def\radius{1}               

\foreach \i in {0,...,\numexpr\n-1} {
        \coordinate (P\i) at ({\radius*cos(360*\i/\n)}, {\radius*sin(360*\i/\n)});
        \node at (P\i) {}; 
}
\foreach \i in {0,...,\numexpr\n-1} {
    \foreach \j in {1,...,\numexpr\r} {
        \pgfmathtruncatemacro{\k}{mod(\i+\j,\n)} 
        \draw (P\i) -- (P\k);
    }
}

\foreach \i in {0,...,\numexpr\n-1} {
    \fill (P\i) circle (3pt) node[shift={(0,0.2)}] {};
}

\end{tikzpicture}
\caption{The graphs $C_n^r$, for $n=4,6,$ and $8$ and $r=0,1, $ and $2.$}
\label{fig:powercycle}
\end{center}
\end{figure}

As a corollary, $|\mathcal{I}_k(C_n^r)| = \frac{n}{n-rk}\binom{n-rk}{k}.$ This closed formula appears to be new for $r \geq 2$, although this sequence appears to be related to the Raney numbers \cite{raney}, which are defined by $R_{p,r}(k) = \frac{r}{pk+r}\binom{kp+r}{k}$ for positive integers $k, p,$ and $r$. If we set $r=n$, and $p=-r$, we obtain our formula.

\begin{example}
Let $n = 12$, $r=2, k=3$. We see that the orbits generated by the independent sets $\{0,3, 6\}, \{0,4,7\},$ or $\{0,5,8\}$ are free orbits of size 12, while the orbit generated by $\{0,4,8\}$ has size 4. Moreover, 
\begin{align*} \frac{[12]}{[6]} \gau{6}{3} & = (1+q^6)(1+q+2q^2+3q^3+3q^4+3q^5+3q^6+2q^7+q^8+q^9) \\ 
& \simeq 3[12]+1+q^3+q^6+q^9 (\mbox{ mod } q^{12}). \end{align*}
\end{example}

The CSP given in Theorem \ref{thm:main} has been studied before in cases $r=0$ and $r = 1$. The case $r=0$ is Theorem 1.1b in \cite{CSP1}. For $r=1$ an independent set can also be viewed as a subset of $[0,n-1]$ with no consecutive elements in the set, where we consider $n-1$ and $0$ to be consecutive. The resulting CSP is Theorem 1.2, part 2 in \cite{CSP3}. 

Our proof of the theorem is given in Section \ref{sec:main}, and will be based on expressing our polynomial as a generating function and explicit computations. We would be interested in knowing if a proof using linear algebra can be given. 

Our next theorem, which is proven in Section \ref{sec:whisker}, concerns the extension of CSPs through a graph construction. Given a graph $G$ with vertex set $V$, the \emph{whiskering} of $G$ is a new graph, denoted $w(G)$ on vertex set $V \cup V'$, where $V'$ is a set that is disjoint from $V$, and in bijection with $V.$ Given $v \in V$, we let $v'$ be the corresponding vertex in $V'.$ We require $uv \in E(w(G))$ whenever $u,v \in V$ with $uv \in E(G)$, and we also require $vv' \in E(w(G))$ for all $v \in V.$ See Figure \ref{fig:whisker} for some examples of whiskerings of graphs.

Let $\mathfrak{G}$ be a group that acts on $G.$ Then for each $v' \in V',$ and $\mathfrak{g} \in \mathfrak{G}$, we let $\mathfrak{g}(v') = (\mathfrak{g}(v))'.$ Thus we extend the action from $V$ to $V \cup V'.$ Therefore $\mathfrak{G}$ acts on $w(G)$ as well.

Our result involves instances of the CSP for whiskerings of graphs $G$ for which a cyclic group $\mathfrak{C}$ acts freely. Recall that $\mathfrak{C}$ acts freely on a set $X$ if, for every $x \in X$ and $\mathfrak{g} \in \mathfrak{C}$, we have $\mathfrak{g}x = x$ if and only if $\mathfrak{g}$ is the identity element. We say $\mathfrak{C}$ acts freely on a graph $G$ if the $\mathfrak{C}$ acts freely on $V(G)$, and this action preserves the adjacency relation of the graph.

\begin{theorem}
Let $k \leq n$. Let $G$ be a graph, and let $\mathfrak{C}$ be a cyclic group that acts freely on $V(G)$. Suppose that, for each $j \leq k$, there is a polynomial $f_j(q)$ such that $(\mathcal{I}_j(G), \mathfrak{C}, f_j(q))$ exhibits the CSP. 
Then $\mathfrak{C}$ acts on $w(G)$, and \[\left(\mathcal{I}_k(w(G)), \mathfrak{C}, \sum_{j=0}^k f_j(q) \gau{n-j}{k-j}\right)\] exhibits the CSP.
\label{thm:whisker}
\end{theorem}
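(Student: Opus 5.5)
Here $n$ is presumably $|V(G)|$, and the whiskering doubles the vertex count. I will decompose $\mathcal{I}_k(w(G))$ according to the part lying in $V$. Any $S \in \mathcal{I}_k(w(G))$ splits as $S = T \cup U'$, where $T = S \cap V$ is an independent set of $G$ of some size $j$, and $U' = S \cap V'$ with $U \subseteq V$ and $|U| = k-j$. The only edges between $V$ and $V'$ are the whiskers $vv'$, so independence of $S$ means exactly that $T \in \mathcal{I}_j(G)$ and $U \cap T = \emptyset$. Hence $\mathcal{I}_k(w(G))$ is in bijection with pairs $(T,U)$, where $T \in \mathcal{I}_j(G)$ and $U$ is a $(k-j)$-subset of $V \setminus T$. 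This bijection is $\mathfrak{C}$-equivariant, since $\mathfrak{g}(v') = (\mathfrak{g}v)'$. The set $\mathcal{I}_k(w(G))$ is therefore the disjoint union over $j$ of $\mathfrak{C}$-stable pieces $X_j$. It suffices to show $|X_j^{\mathfrak{g}}| = f_j(\omega)\gau{n-j}{k-j}_{q=\omega}$ for each $j$, where $\mathfrak{g}$ has order $d$ and $\omega$ is a primitive $d$th root of unity.

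For the fixed-point count, $(T,U)$ is fixed by $\mathfrak{g}$ iff both $T$ and $U$ are fixed. So $|X_j^{\mathfrak{g}}| = \sum_{T \in \mathcal{I}_j(G)^{\mathfrak{g}}} \#\{U \subseteq V\setminus T : |U| = k-j,\ \mathfrak{g}U = U\}$. Since $\mathfrak{C}$ acts freely on $V$, the element $\mathfrak{g}$ of order $d$ acts on $V$ with all cycles of length exactly $d$; in particular $d \mid n$. A fixed $T$ is a union of such cycles, so $V \setminus T$ is a union of $(n-j)/d$ cycles of length $d$. The number of $\mathfrak{g}$-fixed $(k-j)$-subsets of $V\setminus T$ is therefore $\binom{(n-j)/d}{(k-j)/d}$ if $d \mid k-j$ and $0$ otherwise. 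This count is independent of $T$, so
\[ |X_j^{\mathfrak{g}}| = |\mathcal{I}_j(G)^{\mathfrak{g}}| \cdot \#\{\text{fixed } (k-j)\text{-subsets of an } (n-j)\text{-set under a fixed-point-free permutation with all cycles of length } d\}. \]
If $d \nmid j$, then $\mathcal{I}_j(G)^{\mathfrak{g}}$ is empty (a fixed set is a union of $d$-cycles), and $f_j(\omega) = 0$ by hypothesis, so both sides vanish.

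When $d \mid j$, and hence $d \mid n-j$, the hypothesis gives $|\mathcal{I}_j(G)^{\mathfrak{g}}| = f_j(\omega)$. It remains to invoke the classical $q$-Lucas evaluation of Theorem 1.1 in \cite{CSP1} (the case $r=0$ of Theorem \ref{thm:main}). For $d \mid N$, that evaluation gives $\gau{N}{m}_{q=\omega} = \binom{N/d}{m/d}$ if $d \mid m$ and $0$ otherwise. Applied with $N = n-j$ and $m = k-j$, this equals the fixed-subset count above. Equivalently, it is the CSP for $k-j$ subsets of an $(n-j)$-set under a cyclic action, which Theorem \ref{thm:main} with $r=0$ gives for $\mathfrak{C}_{n-j}$; we only need the numerical evaluation, which depends solely on the cycle type.

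Summing over $j$ gives the result. The main point needing care is the middle step: justifying that the count of fixed $U$ depends only on the cycle structure, and that freeness makes every cycle of $\mathfrak{g}$ have length exactly $d$. Without freeness, $V\setminus T$ could have cycles of mixed lengths and the $q$-binomial evaluation would fail. The rest is bookkeeping.
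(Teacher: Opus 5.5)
Your proof is correct and takes essentially the same route as the paper. Both arguments decompose $\mathcal{I}_k(w(G))$ by $j = |S \cap V|$ and use the hypothesis to count the $\mathfrak{g}$-fixed independent sets $T$ of $G$. For each such $T$, the fibre is the set of $(k-j)$-subsets of $V \setminus T$, on which $\langle \mathfrak{g} \rangle$ acts freely, so its fixed-point count is $\gau{n-j}{k-j}$ evaluated at $\omega$. Your explicit cycle-count justification of that step, using $d \mid n-j$, is if anything more careful than the paper's appeal to $\mathcal{I}_{k-j}(C^0_{n-j})$.
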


Our final result gives a recursive method to finding CSPs for independent sets of graphs.
 Recall that, given a graph $G$, and $x \in G$, we have $|\mathcal{I}_k(G)| = |\mathcal{I}_k(G \setminus x)| + |\mathcal{I}_{k-1}(G \setminus N[x])|,$ where $N[x] = \{v: v \sim x \} \cup \{x\}$ is the closed neighborhood of $x$. We provide a cyclic-sieving generalization of this fact.

Let $\mathfrak{C}$ be a cyclic group acting on a graph $G$, such that there is a fixed point $x$. Then $\mathfrak{C}$ fixes $N(x)$, and thus $\mathfrak{C}$ acts on both $G \setminus x$ and $G \setminus N[x]$.
\begin{lemma}
    Let $\mathfrak{C}$ be a cyclic group acting on a graph $G$ such that $x$ is a fixed point of the action. Fix a positive integer $k$. Suppose that we have polynomials $a(q)$ and $b(q)$ such that $(\mathcal{I}_{k}(G \setminus x), \mathfrak{C}, a(q))$ and $(\mathcal{I}_{k-1}(G \setminus N[x]), \mathfrak{C}, b(q))$ both exhibit the CSP. Then $(\mathcal{I}_k(G), \mathfrak{C}, a+b)$ also exhibits the CSP.
    \label{lem:recurse}
\end{lemma}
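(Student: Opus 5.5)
The plan is to prove the equivariant analogue of the deletion–contraction identity and then read off fixed-point counts. First, partition $\mathcal{I}_k(G)$ according to whether an independent set contains $x$. Sets not containing $x$ are exactly the elements of $\mathcal{I}_k(G \setminus x)$. Sets containing $x$ are in bijection with $\mathcal{I}_{k-1}(G \setminus N[x])$, via $S \mapsto S \setminus \{x\}$ with inverse $T \mapsto T \cup \{x\}$. This map is well defined because an independent set containing $x$ contains no neighbor of $x$, and adding $x$ to an independent set avoiding $N[x]$ keeps it independent.

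Next, check that both pieces and the bijection are compatible with the $\mathfrak{C}$-action. Since $x$ is fixed by every $\mathfrak{g}\in\mathfrak{C}$, we have $x \in \mathfrak{g}S$ if and only if $x \in S$, so each part of the partition is $\mathfrak{C}$-stable. Because automorphisms preserve adjacency and fix $x$, they map $N(x)$ to itself. Hence $\mathfrak{C}$ acts on $G\setminus x$ and $G\setminus N[x]$, as the paper already observes. Moreover $\mathfrak{g}(S\setminus\{x\}) = \mathfrak{g}S \setminus \{x\}$, so the bijection is $\mathfrak{C}$-equivariant. The action on $\{S: x\in S\}$ restricted from $\mathcal{I}_k(G)$ therefore matches the action on $\mathcal{I}_{k-1}(G\setminus N[x])$, and the action on the other part is literally the action on $\mathcal{I}_k(G\setminus x)$.

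Finally, fix $\mathfrak{g}$ of order $d$ and a primitive $d$th root of unity $\omega$. Fixed points of $\mathfrak{g}$ in a disjoint union of stable subsets are the union of the fixed points in each subset, and equivariant bijections preserve fixed points. This gives
\[
|\mathcal{I}_k(G)^{\mathfrak{g}}| = |\mathcal{I}_k(G\setminus x)^{\mathfrak{g}}| + |\mathcal{I}_{k-1}(G\setminus N[x])^{\mathfrak{g}}| = a(\omega)+b(\omega).
\]
The last equality uses the two hypothesized CSPs, and the result is exactly $(a+b)(\omega)$, which is the cyclic sieving phenomenon for $(\mathcal{I}_k(G), \mathfrak{C}, a+b)$.

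There is no serious obstacle here. The only point needing care is that $\mathfrak{C}$ may not act faithfully on the subgraphs $G\setminus x$ and $G\setminus N[x]$, so an element of order $d$ in $\mathfrak{C}$ might act with smaller order on them. The hypotheses should therefore be read as CSPs for $\mathfrak{C}$ itself, with group elements indexed by their order in $\mathfrak{C}$, exactly as the statement phrases them. Under that reading the argument above goes through verbatim.
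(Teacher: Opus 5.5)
Your proof is correct and is the argument the paper intends: the equivariant refinement of the identity $|\mathcal{I}_k(G)| = |\mathcal{I}_k(G\setminus x)| + |\mathcal{I}_{k-1}(G\setminus N[x])|$, using that $\mathfrak{C}$ stabilizes $N(x)$, which the paper recalls just before the lemma. The paper says the proof is given in Section~\ref{sec:more} but never actually writes it out, so your argument supplies it; your remark that orders are taken in $\mathfrak{C}$ itself also matches the paper's definition of the CSP.
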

We give a proof of the Lemma in Section \ref{sec:more}, where we also use this lemma, and the preceding theorems, to find CSPs related to gear graphs, helm graphs, and book graphs. 



\section{Cyclic Sieving for Powers of Cycle graphs}
\label{sec:main}

Our first step in proving Theorem \ref{thm:main} is to express $i_{k,r,n}(q)$ as a generating function. To this end,
we introduce a generating function $i_k(G,q)$ for graphs with vertex set $[0,n-1].$  
Given a subset $A \subseteq [0,n-1]$, and a positive integer $r$, we let
\[ \ssum_r(A) =  - (r+1)\binom{k}{2} + \sum_{a \in A} a. \]
For example, with $n = 12, k =4$, and $r=3$, we have $\ssum_{3}(\{0,5,9,11 \}) = -4(6) + 0 + 5 + 9 + 11 = 1.$

Let $G$ be a graph with vertex set $[0,n-1]$. We let $\mathcal{I}_k(G)$ denote the collection of independent sets of size $k$ of $G.$ For any $\mathcal{J} \subseteq \mathcal{I}_k(G)$, 
we define \[i(\mathcal{J},q) = \sum_{A \in \mathcal{J}} q^{\ssum_r(A)}.\] 
We let $i_k(G, q) = i(\mathcal{I}_k(G),q).$ This generating function is a Laurent polynomial. However, for the graphs $G$ we focus on, $\ssum_r(A) \geq 0$ for all $A \in \mathcal{I}_k(G)$, and thus $i_k(G,q)$ is a polynomial.

Recall that the $r$th power of the path graph, $P_n^r$, is the graph on vertex set $[0,n-1]$, with edges of the form $i \sim i+j$ for $1\leq j \leq r$ and $i \in [0,n-1].$
We see that $\mathcal{I}_2(P_5^2) = \{\{0,3\}, \{0,4\}, \{1,4\}\}$ and $i_2(P^2_5,q) = 1+q+q^2 = \gau{3}{2}.$ We have found closed formulas for $i_k(P_n^r,q)$ and $i_k(C_n^r,q).$

\begin{proposition}
Let $r \leq n$ be positive integers, and let $k$ be a positive integer such that $k \leq \frac{n}{r+1}.$
Then the following identities hold:
\begin{enumerate}
\item $i_k(P^r_n,q) = \gau{n-rk+r}{k}$,
\item $i_k(C_n^r, q) = \frac{[rk]}{[k]}i_{k-1}(P_{n-1-2r}^r,q)+q^{rk}i_k(P^r_{n-r},q)$ 
\item $i_k(C_n^r,q) = \frac{[n]}{[n-rk]}\gau{n-rk}{k}.$
\label{prop:closed}
\end{enumerate}
\end{proposition}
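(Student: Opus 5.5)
The plan is to prove the three identities in order: (1) by a standard "gap-compression" bijection, (2) by splitting the independent sets of $C_n^r$ according to whether they meet the block $[0,r-1]$, and (3) by substituting (1) into (2) and simplifying with $q$-integer identities.

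\textbf{Part 1.} I will use the bijection that removes the forced gaps. An independent $k$-set of $P_n^r$ is a sequence $a_1<\cdots<a_k$ in $[0,n-1]$ with $a_{i+1}-a_i\geq r+1$. Setting $b_i=a_i-r(i-1)$ gives a bijection onto $k$-subsets $b_1<\cdots<b_k$ of $[0,n-1-r(k-1)]$, a set of size $n-rk+r$. Under this map
\[\sum_i a_i=\sum_i b_i+r\binom{k}{2},\qquad\text{so}\qquad \ssum_r(A)=\sum_i b_i-\binom{k}{2}.\]
The standard fact that $\sum_B q^{\sum B-\binom{k}{2}}$ over $k$-subsets $B$ of an $m$-set $[0,m-1]$ equals $\gau{m}{k}$ then gives (1).

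\textbf{Part 2.} Let $A\in\mathcal{I}_k(C_n^r)$. The vertices $0,\dots,r-1$ are pairwise adjacent, so $A$ meets $[0,r-1]$ in at most one point.
\begin{itemize}
\item If $A\cap[0,r-1]=\emptyset$, then $A\subseteq[r,n-1]$. Two vertices $a<b$ there have wrap-around distance $n-(b-a)\geq r+1$, so cyclic independence coincides with path independence. Hence $A-r$ is an independent set of $P^r_{n-r}$, and since $\ssum_r(A)=\ssum_r(A-r)+rk$, this case contributes $q^{rk}i_k(P^r_{n-r},q)$.
\item If $A\cap[0,r-1]=\{j\}$, then $A\setminus\{j\}$ lies in $[j+r+1,\,n+j-r-1]$. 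This is an interval of $n-2r-1$ vertices on which again no wrap-around adjacencies occur. Translate by $j+r+1$ to get $B'\in\mathcal{I}_{k-1}(P^r_{n-1-2r})$. Using $(r+1)\bigl(\binom{k}{2}-\binom{k-1}{2}\bigr)=(r+1)(k-1)$, one computes $\ssum_r(A)=\ssum_r(B')+jk$.
\end{itemize}
Summing $q^{jk}$ over $j=0,\dots,r-1$ gives $1+q^k+\cdots+q^{k(r-1)}=[rk]/[k]$, which yields (2). This bookkeeping step — checking that the interval bounds are right and that no cyclic adjacencies are lost in either case — is where I expect the main care to be needed. I will sanity-check it against the $n=12,r=2,k=3$ example.

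\textbf{Part 3.} Put $m=n-rk$. Applying (1) to both terms of (2) gives
\[i_k(C_n^r,q)=\frac{[rk]}{[k]}\gau{m-1}{k-1}+q^{rk}\gau{m}{k}.\]
Now use $\gau{m-1}{k-1}=\frac{[k]}{[m]}\gau{m}{k}$ together with $[rk]+q^{rk}[m]=[rk+m]=[n]$:
\[i_k(C_n^r,q)=\gau{m}{k}\,\frac{[rk]+q^{rk}[m]}{[m]}=\frac{[n]}{[n-rk]}\gau{n-rk}{k}.\]
The hypothesis $k\leq n/(r+1)$ guarantees that all the paths involved have nonnegative size and that $m\geq k$. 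Finally, I will check the edge case $k=1$ separately: there $i_0=1$, and both sides equal $[n]$.
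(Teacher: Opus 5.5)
Your proposal is correct and follows the paper's proof essentially step for step. Part 2 uses the same split into sets containing some $j\in[0,r-1]$ versus sets avoiding $[0,r-1]$, with the same translations and weight shifts $q^{jk}$ and $q^{rk}$. Part 3 is the same algebra, via $\gau{m-1}{k-1}=\frac{[k]}{[m]}\gau{m}{k}$ and $[rk]+q^{rk}[m]=[n]$.

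The only difference is cosmetic. In Part 1 you strip all forced gaps at once with $b_i=a_i-r(i-1)$, while the paper strips one unit at a time ($a_i\mapsto a_i-i+1$, sending $P_n^r$ to $P_{n-k+1}^{r-1}$) and inducts on $r$; composing the paper's steps gives your map.
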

\begin{proof}
 We prove the first result by induction on $r$. 
 When $r=0$, then \[i_k(P_n^0,q) = \sum_{A \subset [0,n]: |A| = k} q^{\ssum_0(A)} = \gau{n}{k}. \]

For $r > 1$, let $A \in \mathcal{I}_k(P^r_n).$ Write $A = \{a_1, \ldots, a_k\}$ where $0 \leq a_1 < a_2 < \cdots < a_k \leq n-1.$ 
Define $A' = \{a_i - i+1: i \in [k] \}$. We observe that $A' \in \mathcal{I}_{k}(P_{n-k+1}^{r-1})$, and moreover $\ssum_r(A) = \ssum_{r-1}(A').$ Hence \[i_k(P_n^r,q) = i_k(P_{n-k+1}^{r-1},q) = \gau{(n-k+1)-(r-1)k+(r-1)}{k} = \gau{n-rk+r}{k}\] 
where the middle equality follows from induction. 

Now we prove our second identity.
Let $r < n$ be positive integers, and let $k$ be a positive integer such that $k(r+1) \leq n$. 
For a given $j \in [0,r-1]$, define
\[\mathcal{A}_j = \{A \in \mathcal{I}_k(C_n^r): j \in A \} \]
Define $\varphi_j: \mathcal{A}_j \to \mathcal{I}_{k-1}(P_{n-2r-1}^r)$ by $\varphi_j(A) = \{a - j-r-1: a \in A \setminus \{j\} \}$. Then $\varphi_j$ is a bijection, and we see that 
\[\ssum_r(A) = j+(j+r+1)(k-1) - (r+1)(k-1) + \ssum_r(\varphi_j(A)) = jk + \ssum_r(\varphi_j(A)).\] 
Thus \[i(\mathcal{A}_j,q) = q^{jk} i_{k-1}(P_{n-2r-1}^r,q) .\]

We let $\mathcal{B} = \{A \in \mathcal{I}_k(C_n^r): A \cap [0,r-1] = \emptyset \}$. We define $\theta:\mathcal{B} \to \mathcal{I}_k(P_{n-r}^r)$ by 
$\theta(B) = \{b - r: b \in B \}$. Then $\theta$ is a bijection, and for $B \in \mathcal{B}$ we have $\ssum_r(B) = rk + \ssum_r(\theta(B))$.
Thus, \[i(\mathcal{B},q) = q^{rk} i_k(P_{n-r}^r,q) .\]

Then we see that 
\begin{align*} i_k(C_n^r,q)  & =  \sum_{j=0}^{r-1} i(\mathcal{A}_j,q)  +  i(\mathcal{B},q) 
 =  \sum_{j=0}^{r-1} q^{jk} i_{k-1}(P_{n-2r-1}^r,q)  +  q^{rk}i_k(P_{n-r}^r,q) \\ 
 & =  i_{k-1}(P_{n-2r-1}^r,q)\sum_{j=0}^{r-1} q^{jk}  +  q^{rk}i_k(P_{n-r}^r,q)  
 =  \frac{[rk]}{[k]} i_{k-1}(P_{n-2r-1}^r,q)  +  q^{rk}i_k(P_{n-r}^r,q). \end{align*}

The third identity follows from the first two, via
\begin{align*}
i_k(C_n^r, q) & =  \frac{[rk]}{[k]} i_{k-1}(P_{n-2r-1}^r,q)  +  q^{rk}i_k(P_{n-r}^r,q) \\
 & =  \frac{[rk]}{[k]}\gau{n-2r-1-r(k-1)+r}{k-1}  +  q^{rk}\gau{n-r-rk+r}{k} \\
& =  \frac{[rk]}{[n-rk]}\gau{n-rk}{k}  +  q^{rk}\frac{[n-rk]}{[n-rk]}\gau{n-rk}{k}  \\
 & =  \frac{[rk]+q^{rk}[n-rk]}{[n-rk]}\gau{n-rk}{k}  =   \frac{[n]}{[n-rk]}\gau{n-rk}{k}.
\end{align*}
\end{proof}

We prove Theorem \ref{thm:main} by direct evaluation of $i_k(C_n^r,q)$ at a root of unity. We rely on the following lemma (Lemma 2.4 in \cite{CSP2}, and Proposition 4.2 in \cite{CSP1}):
\begin{lemma}
    Let $d \mid n$, and let $\omega$ be a primitive $d$th root of unity. Let $a$ and $b$ be positive integers with $a \equiv b \mbox{ mod } n.$ Then \[\lim_{q \to \omega} \frac{[a]}{[b]} = \begin{cases} \frac{a/d}{b/d} & d \mid a \\ 1 & d \nmid a. \end{cases}\]

    Let $k \leq n$ and suppose that $d \mid n.$ Then \[\gau{n}{k}_{q=\omega} = \binom{n/d}{k/d}\]
    \label{lem:eval}
\end{lemma}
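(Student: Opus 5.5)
This lemma is quoted from \cite{CSP2} and \cite{CSP1}, and I would prove its two parts separately, both by factoring $q$-integers over roots of unity. The basic fact is that for a primitive $d$th root of unity $\omega$ and a positive integer $m$, $[m]_{q=\omega}=\frac{1-\omega^m}{1-\omega}$ when $d>1$. This vanishes exactly when $d \mid m$. When $d\mid m$, writing $1-q^m = (1-q^d)(1+q^d+\cdots+q^{m-d})$ gives $\frac{1-q^m}{1-q^d}\to m/d$ as $q\to\omega$.

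\textbf{First part.} Suppose $a\equiv b \pmod n$ and $d\mid n$; then $a\equiv b \pmod d$. I would split into two cases.
\begin{itemize}
\item If $d\nmid a$, then $d\nmid b$ and $\omega^a=\omega^b\neq 1$. Both numerator and denominator are nonzero at $\omega$, and
\[
\frac{[a]}{[b]} = \frac{1-q^a}{1-q^b} \to \frac{1-\omega^a}{1-\omega^b} = 1 .
\]
\item If $d\mid a$, then $d\mid b$. Write
\[
\frac{[a]}{[b]} = \frac{(1-q^a)/(1-q^d)}{(1-q^b)/(1-q^d)} .
\]
Numerator and denominator are polynomials in $q$ with values $a/d$ and $b/d$ at $\omega$, so the limit is $\frac{a/d}{b/d}$.
\end{itemize}

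\textbf{Second part.} This is the $q$-Lucas evaluation of $\gau{n}{k}$ at $q=\omega$. I read the statement as: $\binom{n/d}{k/d}$ when $d\mid k$, and $0$ otherwise. I would use the product formula
\[
\gau{n}{k}=\prod_{i=1}^{k}\frac{[n-k+i]}{[i]}
\]
and evaluate it as a limit $q\to\omega$. This is legitimate because the left side is a polynomial, so its value equals the limit of the product.

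For each $i$, the factor $\frac{[n-k+i]}{[i]}$ can be analysed as above: $n-k+i \equiv i-k \pmod d$.
\begin{itemize}
\item \textbf{Case $d\mid k$.} Then $n-k+i\equiv i\pmod d$, so the first part, applied modulo $d$ rather than $n$ (the argument only used congruence modulo $d$), gives each factor limit $1$ if $d\nmid i$. If $i=dj$, the factor limit is $\frac{n/d-k/d+j}{j}$. Multiplying over $j=1,\dots,k/d$ gives $\binom{n/d}{k/d}$.
\item \textbf{Case $d\nmid k$.} I would count zeros and poles. Among the numerator factors $[n-k+1],\dots,[n]$ there are $\lfloor k/d\rfloor+1$ multiples of $d$, since the window ends at $n\equiv 0 \pmod d$. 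Among the denominator factors $[1],\dots,[k]$ there are only $\lfloor k/d\rfloor$. Each $[m]$ with $d\mid m$ has a simple zero at $\omega$. So the product has net vanishing order at least $1$, and the value is $0$.
\end{itemize}

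\textbf{Main obstacle.} The only delicate point is justifying the zero/pole bookkeeping in the case $d\nmid k$, and the regrouping of factors into ratios in the case $d\mid k$. I would handle both uniformly by noting that each $[m]$ has a simple zero at $\omega$ exactly when $d\mid m$, and that cyclotomic factors combine multiplicatively. The rest is routine.
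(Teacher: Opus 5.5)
Your proof is correct. The paper gives no proof of this lemma; it quotes it as Lemma 2.4 of the cited survey and Proposition 4.2 of the original cyclic sieving paper. Your argument is the standard proof from those sources: termwise limits of $[n-k+i]/[i]$ via the first part when $d\mid k$, and a count of $\lfloor k/d\rfloor+1$ versus $\lfloor k/d\rfloor$ simple zeros at $\omega$ when $d\nmid k$. It is also the same zero-counting the paper uses when it evaluates $i_k(C_n^r,q)$ at $\omega$ in the proof of its main theorem.
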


Given a graph $G$, let $\mathfrak{G}$ be a group that acts freely on $G$. Let $\mathfrak{g} \in \mathfrak{G}$ be an automorphism of order $d$. We define $G/\mathfrak{g}$ as the graph whose vertex sets are the cycles of $\mathfrak{g}$, and where two cycles $C$ and $C'$ of $\mathfrak{g}$ are adjacent if and only if there exists $u \in C, v \in C'$ with $uv \in E(G)$.

Similarly, given $A \subseteq [0,n-1]$, with $\mathfrak{g}A = A$, we let $A/\mathfrak{g}$ be the set of cycles $C$ of $\mathfrak{g}$ such that $C \subseteq A$. The map $A \mapsto A/\mathfrak{g}$ is a bijection between $\mathcal{I}_k(G)^{\mathfrak{g}}$ and $\mathcal{I}_{k/d}(G/\mathfrak{g})$ when $d \mid k.$ Thus we obtain the following lemma:
\begin{lemma}
Let $\mathfrak{G}$ be a group that acts freely on a graph $G.$ Let 
$\mathfrak{g} \in \mathfrak{G}$ have order $d$. Let $k \leq |V(G)|.$ Then 
\[|\mathcal{I}_k(G)^{\mathfrak{g}}| = \begin{cases} |\mathcal{I}_{k/d}(G/\mathfrak{g})| & d \mid k \\ 0 & d \nmid k \end{cases}\]
\label{lem:quotient}
\end{lemma}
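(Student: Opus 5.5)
The plan is to prove Lemma \ref{lem:quotient} directly from the free action, using the bijection $A \mapsto A/\mathfrak{g}$ described just before the statement.

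\textbf{Orbit structure.} Since $\mathfrak{G}$ acts freely, the subgroup $\langle \mathfrak{g}\rangle$ also acts freely on $V(G)$. Hence every cycle of $\mathfrak{g}$ on $V(G)$ has length exactly $d$.

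\textbf{The case $d \nmid k$.} If $A \in \mathcal{I}_k(G)^{\mathfrak{g}}$, then $\mathfrak{g}A = A$, so $A$ is a union of cycles of $\mathfrak{g}$. Therefore $|A|$ is a multiple of $d$. If $d \nmid k$, no such $A$ exists, and the count is $0$.

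\textbf{The case $d \mid k$: setting up the map.} Suppose $d \mid k$. The same observation shows that a fixed $A$ is a disjoint union of exactly $k/d$ cycles, and $A/\mathfrak{g}$ records which cycles these are. This gives a map from fixed $k$-sets to $(k/d)$-sets of vertices of $G/\mathfrak{g}$. It is injective, and its inverse sends a set of cycles to their union. That union is $\mathfrak{g}$-stable and has size $k$.

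\textbf{Matching independence.} It remains to check that $A$ is independent in $G$ if and only if $A/\mathfrak{g}$ is independent in $G/\mathfrak{g}$.

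\emph{Independent quotient gives independent $A$.} Suppose $u,v \in A$ with $uv \in E(G)$, and let $C$ and $C'$ be their cycles. If $C \neq C'$, then $C$ and $C'$ are adjacent in $G/\mathfrak{g}$ by definition. If $C = C'$, the edge lies inside one cycle. The quotient graph has no loops (we work with simple graphs), so this case does not show up as an edge of $G/\mathfrak{g}$. Hence this direction needs care.

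\emph{Independent $A$ gives independent quotient.} If $A$ is independent, then no two distinct cycles contained in $A$ can be joined by an edge, so $A/\mathfrak{g}$ is independent.

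\textbf{Main obstacle.} The delicate point is an edge lying inside a single cycle of $\mathfrak{g}$. The statement implicitly treats such a cycle as a vertex that cannot be used, i.e.\ as carrying a loop. I would handle this by interpreting $G/\mathfrak{g}$ as having a loop at $C$ whenever some $u,v \in C$ are adjacent. Then "independent" in $G/\mathfrak{g}$ excludes such cycles, and the bijection holds exactly. I would note this convention explicitly. For the later application to $C_n^r$, a cycle of $\mathfrak{g}^{n/d}$ is a coset of $(n/d)\mathbb{Z}$. Such a coset contains an internal edge exactly when $n/d \le r$, and this situation corresponds to the degenerate cases to be checked there.

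With this convention the map is a bijection $\mathcal{I}_k(G)^{\mathfrak{g}} \to \mathcal{I}_{k/d}(G/\mathfrak{g})$, which proves the formula.
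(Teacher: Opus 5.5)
Your proposal is correct and follows the paper's route. The paper simply asserts that $A\mapsto A/\mathfrak{g}$ is a bijection from $\mathcal{I}_k(G)^{\mathfrak{g}}$ onto $\mathcal{I}_{k/d}(G/\mathfrak{g})$, and you fill this in via free action $\Rightarrow$ all cycles of $\mathfrak{g}$ have length $d$ $\Rightarrow$ stable sets are unions of cycles. Your loop convention for a cycle containing an internal edge is genuinely needed and not just caution. For example, $G=C_4$ with the order-$4$ rotation and $k=4$ gives $0$ fixed independent sets, but the loopless quotient is a single vertex with one independent $1$-set. This convention is harmless in the paper's application, since $(r+1)k\le n$ and $d\mid k$ force $n/d\ge r+1$, so no cycle of $\mathfrak{g}$ in $C_n^r$ contains an edge.
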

We state one more Lemma:
\begin{lemma}
Let $n$ be a positive integer, and let $r > 1.$
Let $\mathfrak{C}_n$ be a cyclic group of order $n$ acting on $C_n^r$ via rotation. 
Let $\mathfrak{g} \in \mathfrak{C}_n$ have order $d$. Then $C_n^r/\mathfrak{g} \simeq C_{n/d}^r.$
The isomorphism $\iota:V(C_n^r/\mathfrak{g}) \to V(C_{n/d}^r)$ is given by sending a cycle $C$ of $\mathfrak{g}$ to $\min C$, its minimum element.
\label{lem:isomorphism}
\end{lemma}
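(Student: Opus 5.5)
The plan is to identify the cycles of $\mathfrak{g}$ explicitly as residue classes, and then check that the adjacency relation on classes matches the edge relation of $C_{n/d}^r$.

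\textbf{Cycles of $\mathfrak{g}$.} Put $m = n/d$. The generator of $\mathfrak{C}_n$ acts as $i \mapsto i+1 \bmod n$, so an element $\mathfrak{g}$ of order $d$ is rotation by $s = t m$ for some $t$ with $\gcd(t,d)=1$. Since $m \mid n$, the orbit of $i$ under $\mathfrak{g}$ is contained in the residue class of $i$ modulo $m$. This orbit has size $d$, and the residue class of $i$ in $[0,n-1]$ also has exactly $n/m = d$ elements. Hence the cycles of $\mathfrak{g}$ are exactly the $m$ residue classes modulo $m$. The minimum element of the class of $a$ is the unique representative in $[0,m-1]$, so $\iota$ is a bijection onto $V(C_m^r) = [0,m-1]$.

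\textbf{Adjacency.} Let $C_a, C_b$ be the classes of $a,b \in [0,m-1]$. Suppose $u \in C_a$, $v \in C_b$ and $uv \in E(C_n^r)$. Then $v - u \equiv \pm \ell \pmod n$ for some $\ell \in [1,r]$. Reducing modulo $m$ (valid since $m \mid n$) gives $b - a \equiv \pm \ell \pmod m$, so $ab \in E(C_m^r)$.

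Conversely, suppose $b \equiv a + \ell \pmod m$ with $\ell \in [1,r]$. Take $u = a$ and $v = (a+\ell) \bmod n$. Then $v \equiv b \pmod m$, so $v \in C_b$, and $uv$ is an edge of $C_n^r$. The case $b \equiv a - \ell$ is symmetric.

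\textbf{Degenerate case.} The only subtlety is when $\ell \equiv 0 \pmod m$, i.e.\ $m \le r$. Then an edge of $C_n^r$ may join two vertices in the same cycle, which would produce a loop in $G/\mathfrak{g}$. Correspondingly, the rule $j \sim j+\ell \bmod m$ would produce a loop in $C_m^r$. Since the paper works only with simple graphs, loops are discarded on both sides, so the two descriptions still agree. It is also worth recording that for $m \le 2r$ both graphs are complete on $m$ vertices, so this case causes no real difficulty. In the intended application of Lemma~\ref{lem:quotient} with $d \mid k$ and $(r+1)k \le n$, one has $m \ge r+1$ anyway.

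\textbf{Expected obstacle.} This bookkeeping at small $m$ is the only step I expect to need care. The rest is the routine lifting of congruences from modulo $m$ to modulo $n$, which works because $m \mid n$ and we may choose the representative $u=a$ directly.
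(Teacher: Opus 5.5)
Your proof is correct and rests on the same idea as the paper's: each cycle of $\mathfrak{g}$ is represented by its minimum in $[0,q-1]$ (the paper writes $n=qd$, so $q$ is your $m$), and edges of $C_n^r$ descend to edges of $C_q^r$. The execution differs, though.

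The paper works with explicit integer representatives. It fixes $i=\min C$ and a neighbour $j\in C'$, and splits according to whether the edge $ij$ wraps around past $n-1$. In each case it subtracts $q$ or $n-q$ from $j$ to reach $\min C'$, then checks adjacency in $C_q^r$ by hand. This proves only the forward implication. It also takes for granted that the cycles are residue classes and that $\iota$ is bijective, and its reductions of $j$ to $\min C'$ implicitly need $q>r$.

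Your reduction modulo $m$ handles both wrap-around cases with a single congruence. You also supply what the paper omits: the residue-class description of the cycles (hence bijectivity of $\iota$), the converse via the lift $u=a$, $v=(a+\ell)\bmod n$, and the case $m\le r$.

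On that last case, your computation shows that loops arise on both sides for exactly the same $m$, so the isomorphism holds whether loops are kept or discarded. Keeping them is the convention consistent with Lemma~\ref{lem:quotient}, since a cycle containing an edge lies in no $\mathfrak{g}$-fixed independent set. As you note, the case never arises in the application: there $d\mid k$ and $(r+1)k\le n$ force $m\ge r+1$.
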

\begin{proof}
Let $\mathfrak{g}$ have order $d$, and let $C$ and $C'$ be cycles of $\mathfrak{g}$ that are adjacent in $C^r_n/\mathfrak{g}.$ Define $q$ such that $n=qd$. Let $i \in C, j \in C'$ such that $i < q$ and $i$ is adjacent to $j$.

Suppose that $i < j < i+r$. If $j < q$, then we have $i$ is adjacent to $j$ in $C_{q}^r$, and these are the minimum elements of the corresponding cycles. If $j > q$, then $i+r-q > j - q$, and $i$ is adjacent to $j-q$ in $C_q^r,$ and both are the minimum elements of the corresponding cycles.

Finally, suppose that $j+r-n > i$. We see then that $j> n-q$. Let $t = j-n+q.$ Then $t+r-q > i,$ so $i$ is adjacent to $t$ in $C_q^r$, and both $i$ and $j$ are the minimum elements of their respective cycles. Thus the map $C \mapsto \min C$ preserves edge adjacency.
\end{proof}

\begin{theorem}
Let $n$ be a positive integer, and let $k$ and $r$ be nonnegative integers such that $(k+1)r \leq n.$ Let $\mathfrak{C}_n$ be the cyclic group of order $n$, which acts on the set $[0,n-1]$ by cyclic rotation. Then $\mathfrak{C}$ acts on $\mathcal{I}_k(C^r_n)$, and 
$(\mathcal{I}_k(C^r_n), \mathfrak{C}_n, i_k(C^r_n,q))$ exhibits the CSP.
In particular, for $d \mid n$, $\mathfrak{g} \in \mathfrak{C}$ of order $d$, and $\omega$ a primitive $d$th root of unity, we have 
\[i_k(C^r_n,\omega) = |\mathcal{I}_k(C_n^r)^{\mathfrak{g}}| = \begin{cases} \frac{n/d}{n/d-rk/d}\binom{n/d-rk/d}{k/d} & d \mid k \\ 0 & d \nmid k \end{cases}.\]
\end{theorem}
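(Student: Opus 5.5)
We compute both sides of the claimed identity separately and check that they agree. By Proposition~\ref{prop:closed}, $i_k(C_n^r,q) = \frac{[n]}{[n-rk]}\gau{n-rk}{k}$. The right side is the fixed-point count $|\mathcal{I}_k(C_n^r)^{\mathfrak{g}}|$, which we get from the free action via the quotient graph.

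\textbf{Combinatorial side.} The rotation action of $\mathfrak{C}_n$ on $[0,n-1]$ is free, since a nonidentity rotation fixes no vertex. So for $\mathfrak{g}$ of order $d$, Lemma~\ref{lem:quotient} gives $|\mathcal{I}_k(C_n^r)^{\mathfrak{g}}| = 0$ when $d\nmid k$. When $d \mid k$ it gives $|\mathcal{I}_{k/d}(C_n^r/\mathfrak{g})|$. Lemma~\ref{lem:isomorphism} identifies $C_n^r/\mathfrak{g}$ with $C_{n/d}^r$. That lemma is stated for $r>1$, but the same argument works for $r\le 1$; for $r=0$ both graphs are edgeless. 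Applying the $q=1$ specialization of Proposition~\ref{prop:closed} to $C_{n/d}^r$ with $k/d$ in place of $k$ gives $\frac{n/d}{n/d-rk/d}\binom{n/d-rk/d}{k/d}$. The hypothesis $(r+1)(k/d)\le n/d$ needed there follows from $(r+1)k\le n$.

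\textbf{Algebraic side.} Write $i_k(C_n^r,q)=\frac{[n]}{[n-rk]}\cdot \gau{n-rk}{k}$ and evaluate at a primitive $d$th root of unity $\omega$ with $d\mid n$. Two cases:
\begin{itemize}[label={}]
\item \emph{Case $d \mid k$.} Then $d\mid rk$, hence $d \mid n-rk$. Lemma~\ref{lem:eval} gives $\gau{n-rk}{k}_{q=\omega}=\binom{(n-rk)/d}{k/d}$, and $\frac{[n]}{[n-rk]}\to \frac{n/d}{(n-rk)/d}$. The product is exactly the combinatorial count.
\item \emph{Case $d\nmid k$.} We must show the value is $0$. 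Here we do not want to use Lemma~\ref{lem:eval} with the modulus $n$. Instead, use the standard fact that a $q$-binomial $\gau{m}{k}$ at a primitive $d$th root of unity equals $\binom{\lfloor m/d\rfloor}{\lfloor k/d\rfloor}\gau{m \bmod d}{k\bmod d}_{q=\omega}$ ($q$-Lucas).
\begin{itemize}[label={}]
\item If $d\mid rk$, then $d\mid n-rk$, so $m \bmod d=0$ while $k\bmod d\neq 0$, and the $q$-binomial vanishes. The prefactor $[n]/[n-rk]$ has a finite limit by Lemma~\ref{lem:eval}, so the product is $0$.
\item If $d\nmid rk$, then $d\nmid n-rk$, so $[n-rk]$ does not vanish at $\omega$, while $[n]$ does, since $d\mid n$, $d>1$. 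Hence $\frac{[n]}{[n-rk]}\gau{n-rk}{k}$ evaluates to $0$, because $\gau{n-rk}{k}$ is a polynomial.
\end{itemize}
\end{itemize}

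The main obstacle is the case $d\nmid k$ with $d\mid rk$: there the prefactor does not vanish, so the vanishing has to come from the $q$-binomial. This needs a $q$-Lucas-type evaluation beyond what Lemma~\ref{lem:eval} states, so I would cite it from \cite{CSP1} or \cite{CSP2}. A second, minor point is checking that Lemma~\ref{lem:isomorphism} also covers $r\le 1$, so that the combinatorial side holds for all $r$.
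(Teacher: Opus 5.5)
Your proof is correct. The combinatorial side is the paper's argument: Lemma~\ref{lem:quotient}, then Lemma~\ref{lem:isomorphism}, then the closed formula for $C_{n/d}^r$ at $q=1$. You are slightly more careful than the paper, which applies Lemma~\ref{lem:isomorphism} for all $r$ even though it is stated only for $r>1$.

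The evaluation at $\omega$ takes a different route. The paper first rewrites
\[ i_k(C_n^r,q)=\frac{[n]}{[k]}\gau{n-rk-1}{k-1}=\frac{[n]}{[k]}\prod_{j=1}^{k-1}\frac{[n-rk-k+j]}{[j]}. \]
Writing $k=md+t$, it then counts the $q$-integers that vanish at $\omega$.
\begin{itemize}
\item When $t>0$, the numerator has at least $m+1$ of them: $[n]$, plus at least $m$ among the $k-1$ consecutive integers $n-rk-k+1,\dots,n-rk-1$. The denominator has exactly $m$, so the value is $0$.
\item When $t=0$, it applies the ratio limit of Lemma~\ref{lem:eval} factor by factor.
\end{itemize}
This handles every $d\nmid k$ at once and needs nothing beyond the ratio limit.

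You instead keep the factorization $\frac{[n]}{[n-rk]}\cdot\gau{n-rk}{k}$ and evaluate the two factors separately. This makes the case $d\mid k$ a one-line computation. It also shows exactly where the zero comes from: from $[n]$ when $d\nmid rk$, and from the $q$-binomial when $d\mid rk$. The cost is a case split and an outside evaluation.

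That outside input is weaker than full $q$-Lucas. You only need $\gau{N}{k}_{q=\omega}=0$ when $d\mid N$ and $d\nmid k$. This is the $r=0$ case, i.e.\ the Reiner--Stanton--White $k$-subset CSP that the paper already cites. It also follows from the paper's style of count: the numerator of $\gau{N}{k}$ has $\lfloor (k-1)/d\rfloor+1$ vanishing factors, against $\lfloor k/d\rfloor$ in the denominator.

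Finally, your limit $\frac{[n]}{[n-rk]}\to\frac{n}{n-rk}$ uses Lemma~\ref{lem:eval} under the hypothesis $a\equiv b\pmod d$, not $\pmod n$. That is the correct form of the lemma, and it is the form the paper itself needs for $\frac{[n-rk-k+j]}{[j]}$ and $\frac{[n]}{[k]}$. So the ``mod $n$'' in the lemma's statement should be read as a typo.
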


\begin{proof}
The theorem is trivially true when $k = 0$, so throughout we assume $k > 0.$
Let $d \mid n$, and let $\omega$ be a primitive $d$th root of unity.
We write
\begin{equation}\frac{[n]}{[n-k]}\gau{n-rk}{k} =\frac{[n]}{[k]}\gau{n-rk-1}{k-1} = \frac{[n]}{[k]}\prod_{j=1}^{k-1} \frac{[n-rk-k+j]}{[j]}. \label{eq:factor}\end{equation}
where we are writing the factors in the numerator from smallest to largest.
We let $k = md+t$ for unique integers $m$ and $t$ with $0 \leq t < d$.  

Suppose that $t > 0$. Then the denominator on the right-hand side of Equation \eqref{eq:factor} has $m$ terms $[x]$ where $[x]_{q=\omega} = 0$, while the numerator has at least $m+1$ terms $[y]$ with $[y]_{q=\omega} = 0 $. Thus
\[\lim_{q \to \omega} \frac{[n]}{[n-rk]} \gau{n-rk}{k} = 0. \]

Suppose that $t = 0$. Then the numerator and denominator on the right-hand side of Equation \eqref{eq:factor} have exactly $m$ terms that evaluate to $0$, which cancel. If we apply Lemma \ref{lem:eval} to each ratio in the product on the right-hand side of Equation \eqref{eq:factor}, we see that, for any $j \in [k-1]$, we have 
\[ \lim_{q \to \omega} \frac{[n-rk-k+j]}{[j]} = \begin{cases} \frac{n/d-rk/d-k/d+j/d}{j/d} & d \mid j \\ 1 & d \nmid j \end{cases}. \]
 Thus we obtain
\begin{align*} 
 \lim_{q \to \omega} \frac{[n]}{[k]}\prod_{j=1}^{k-1} \frac{[n-rk-k+j]}{[j]}& = & \frac{n/d}{k/d}\prod_{j=1}^{m-1} \frac{n/d-rk/d-k/d+j}{j} & = & \frac{n/d}{k/d}\binom{n/d-rk/d-1}{k/d-1}. 
\end{align*}

Let $\mathfrak{g}$ be an element of order $d$. We know $\mathfrak{C}_n$ acts freely on $C_n^r$, and thus so does $\mathfrak{g}$. if $d \nmid k$, then $\mathcal{I}_k(C_n^r)^{\mathfrak{g}} = \emptyset$. So we assume $d | k$. Using Lemma \ref{lem:quotient} and Lemma \ref{lem:isomorphism}, we have $|\mathcal{I}_{k}(C_n^r)^{\mathfrak{g}}| = i_{k/d}(C_{n/d}^r,1)$. Applying the formula from Proposition 3, part 3 to evaluate $i_{k/d}(C_{n/d}^r,1)$, we conclude that
\[\mathcal{I}_k(C_n^r)^{\mathfrak{g}} = \begin{cases} \frac{n/d}{n/d-rk/d}\binom{n/d-rk/d}{k/d} & d \mid k \\ 0 & d \nmid k\end{cases}\]
 The result follows from applying our formula from Proposition \ref{prop:closed}, part 3 to evaluate $i_{k/d}(C_{n/d}^r,1).$  
\end{proof}

\section{Whiskering}
\label{sec:whisker}

Fix a positive integer $k$.
Suppose that a cyclic group $\mathfrak{C}$ acts on $V(G)$ freely, and suppose that for all $j \leq k$ there are polynomials $i_j(G,q)$ such that $(\mathcal{I}_j(G), \mathfrak{C}, i_j(G,q))$ exhibits the CSP. 
We know that $\mathfrak{C}$ acts on $V(w(G)).$ Recall that whiskering $G$ involves creating an isomorphic copy of $V(G)$, denoted $V'$, and adding edges of the form $v\sim v'$ for $v \in V(G)$. Figure \ref{fig:whisker} depicts whiskerings of several graphs. Given $j \leq k$, we let $\mathcal{I}_{j,k-j}(w(G)) = \{A \in \mathcal{I}_k(w(G): |A \cap V| = j \}$. We see that $\mathfrak{C}$ acts on $\mathcal{I}_{j,k-j}(w(G)).$
To prove Theorem \ref{thm:whisker}, it suffices to prove the following theorem:

\begin{theorem}
Fix positive integers $j$ and $k$ with $j \leq k.$
Suppose that a cyclic group $\mathfrak{C}$ acts on $V(G)$ freely, and suppose that there is a polynomial $i_j(G,q)$ such that $(\mathcal{I}_j(G), \mathfrak{C}, i_j(G,q))$ exhibits the CSP.
Then
$\left(\mathcal{I}_{j,k-j}(w(G)), \mathfrak{C}, i_j(G,q) \gau{n-j}{k-j}\right)$ exhibits the CSP. 
\end{theorem}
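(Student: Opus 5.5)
Fix $\mathfrak{g}\in\mathfrak{C}$ of order $d$ and a primitive $d$th root of unity $\omega$. The plan is to count $|\mathcal{I}_{j,k-j}(w(G))^{\mathfrak{g}}|$ directly, evaluate $i_j(G,\omega)\gau{n-j}{k-j}_{q=\omega}$ using Lemma \ref{lem:eval}, and check that the two agree. Throughout, $n=|V(G)|$. Since $\mathfrak{C}$ acts freely on $V$, the order $d$ divides $n$, and every cycle of $\mathfrak{g}$ on $V$ (and likewise on $V'$) has length exactly $d$.

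\textbf{Combinatorial side.} An element $A\in\mathcal{I}_{j,k-j}(w(G))$ is a pair $(S,T')$ with $S=A\cap V\in\mathcal{I}_j(G)$ and $T'=A\cap V'$, where $T\subseteq V\setminus S$ and $|T|=k-j$. Indeed, the vertices of $V'$ are pairwise nonadjacent, and $v'$ is adjacent only to $v$. So the only constraint is $T\cap S=\emptyset$, which gives a bijection with pairs $(S,T)$ where $S\in\mathcal{I}_j(G)$ and $T$ is a $(k-j)$-subset of $V\setminus S$. Because $\mathfrak{g}$ preserves $V$ and $V'$ separately and acts compatibly via $v\mapsto v'$, $A$ is fixed exactly when both $S$ and $T$ are fixed. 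The set $V\setminus S$ is a union of $(n-j)/d$ cycles, each of length $d$, when $S$ is fixed. Therefore
\[|\mathcal{I}_{j,k-j}(w(G))^{\mathfrak{g}}| = |\mathcal{I}_j(G)^{\mathfrak{g}}|\cdot N,\]
where $N=\binom{(n-j)/d}{(k-j)/d}$ if $d\mid k-j$ and $N=0$ otherwise. This presumes $d\mid j$; if $d\nmid j$, then $|\mathcal{I}_j(G)^{\mathfrak{g}}|=0$ by Lemma \ref{lem:quotient}, and the product is $0$ anyway.

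\textbf{Algebraic side.} By hypothesis $i_j(G,\omega)=|\mathcal{I}_j(G)^{\mathfrak{g}}|$, so it remains to show that $\gau{n-j}{k-j}_{q=\omega}$ equals $N$ whenever $|\mathcal{I}_j(G)^{\mathfrak{g}}|\neq 0$. In that case $d\mid j$, and hence $d\mid n-j$. The standard $q$-Lucas evaluation, as in Lemma \ref{lem:eval}, then gives $\gau{n-j}{k-j}_{q=\omega}=\binom{(n-j)/d}{(k-j)/d}$ when $d\mid k-j$ and $0$ otherwise. Lemma \ref{lem:eval} is stated only in the case $d\mid k$. For the case $d\nmid k-j$, I will use the factorization $\gau{N'}{K}=\prod_{i=1}^{K}[N'-K+i]/[i]$ with $N'=n-j$ and $K=k-j$. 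Since $d\mid N'$, counting the factors that vanish at $\omega$ gives $\lfloor N'/d\rfloor-\lfloor (N'-K)/d\rfloor$ in the numerator and $\lfloor K/d\rfloor$ in the denominator. The numerator count is strictly larger when $d\nmid K$, so the value is $0$. This is the same zero-counting used in the proof of the theorem in Section \ref{sec:main}.

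\textbf{Conclusion and main obstacle.} Multiplying the two sides gives the CSP for $\mathcal{I}_{j,k-j}(w(G))$. Summing over $j$ then yields Theorem \ref{thm:whisker}, because CSPs add over a $\mathfrak{C}$-invariant disjoint union. The $j=0$ term is $\gau{n}{k}$, taking $f_0=1$. The only delicate step is the root-of-unity evaluation of the $q$-binomial in the case $d\nmid k-j$, which is not literally covered by Lemma \ref{lem:eval}. If the zero-counting argument above proves awkward, the fallback is to cite the classical result that $(\binom{[N']}{K},\mathfrak{C}_{N'},\gau{N'}{K})$ exhibits the CSP, namely Theorem 1.1b of \cite{CSP1}, applied to $V\setminus S$ with the free action of $\langle\mathfrak{g}\rangle$ on it.
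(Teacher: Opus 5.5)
Your proof is correct and follows essentially the same route as the paper: split $A$ into $S=A\cap V$ and $T'=A\cap V'$, note that $A$ is fixed exactly when $S$ is fixed and $T$ is a fixed $(k-j)$-subset of the freely permuted set $V\setminus S$, and factor the count, which does not depend on $S$. The only difference is the second factor: the paper identifies $V'\setminus S'$ with $C^0_{n-j}$ and invokes the $r=0$ CSP (your fallback), while you evaluate $\gau{n-j}{k-j}$ at $\omega$ directly via Lemma \ref{lem:eval} and zero-counting, which also makes the vanishing in the case $d\nmid k-j$ explicit.
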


\begin{figure}
\begin{center}
\begin{tikzpicture}[scale=1]

\pgfmathtruncatemacro{\n}{6} 

\def\radius{1}               

\foreach \i in {0,...,\numexpr\n-1} {
        \coordinate (P\i) at ({\radius*cos(360*\i/\n)}, {\radius*sin(360*\i/\n)});
        \node at (P\i) {}; 
}

\foreach \i in {0,...,\numexpr\n-1} {
    \fill (P\i) circle (3pt) node[shift={(0,0.2)}] {};
}

\foreach \i in {0,...,\numexpr\n-1} {
        \coordinate (Q\i) at ({1.5*\radius*cos(360*\i/\n)}, {1.5*\radius*sin(360*\i/\n)});
        \node at (Q\i) {}; 
        \fill (Q\i) circle (3pt);
        \draw (P\i) -- (Q\i);
}

\end{tikzpicture}
\hspace{1cm}
\begin{tikzpicture}[scale=1]

\pgfmathtruncatemacro{\n}{6} 
\pgfmathtruncatemacro{\r}{1} 

\def\radius{1}               

\foreach \i in {0,...,\numexpr\n-1} {
        \coordinate (P\i) at ({\radius*cos(360*\i/\n)}, {\radius*sin(360*\i/\n)});
        \node at (P\i) {}; 
}
\foreach \i in {0,...,\numexpr\n-1} {
    \foreach \j in {1,...,\numexpr\r} {
        \pgfmathtruncatemacro{\k}{mod(\i+\j,\n)} 
        \draw (P\i) -- (P\k);
    }
}

\foreach \i in {0,...,\numexpr\n-1} {
    \fill (P\i) circle (3pt) node[shift={(0,0.2)}] {};
}

\foreach \i in {0,...,\numexpr\n-1} {
        \coordinate (Q\i) at ({1.5*\radius*cos(360*\i/\n)}, {1.5*\radius*sin(360*\i/\n)});
        \node at (Q\i) {}; 
        \fill (Q\i) circle (3pt);
        \draw (P\i) -- (Q\i);
}

\end{tikzpicture}
\hspace{1cm}
\begin{tikzpicture}[scale=1]

\pgfmathtruncatemacro{\n}{6} 
\pgfmathtruncatemacro{\r}{2} 

\def\radius{1}               

\foreach \i in {0,...,\numexpr\n-1} {
        \coordinate (P\i) at ({\radius*cos(360*\i/\n)}, {\radius*sin(360*\i/\n)});
        \node at (P\i) {}; 
}
\foreach \i in {0,...,\numexpr\n-1} {
    \foreach \j in {1,...,\numexpr\r} {
        \pgfmathtruncatemacro{\k}{mod(\i+\j,\n)} 
        \draw (P\i) -- (P\k);
    }
}

\foreach \i in {0,...,\numexpr\n-1} {
    \fill (P\i) circle (3pt) node[shift={(0,0.2)}] {};
}

\foreach \i in {0,...,\numexpr\n-1} {
        \coordinate (Q\i) at ({1.5*\radius*cos(360*\i/\n)}, {1.5*\radius*sin(360*\i/\n)});
        \node at (Q\i) {}; 
        \fill (Q\i) circle (3pt);
        \draw (P\i) -- (Q\i);
}

\end{tikzpicture}
\caption{The whiskerings of $C_6^0$, $C_6^1$, and $C_6^2$.}
\label{fig:whisker}
\end{center}
\end{figure}

\begin{proof}
Assume $\mathcal{C}$ acts freely on $G$. 
Let $\mathfrak{g} \in \mathfrak{C}$ be an element of order $d$.
 Let $\omega$ be a primitive $d$th root of unity. Suppose that $d \nmid j.$ Let $A \in \mathcal{I}_{j,k-j}(w(G)).$ Then $\mathfrak{g}^d(A \cap V) \neq A \cap V$, so $\mathcal{I}_{j,k-j}(w(G))^{\mathfrak{g}} = \emptyset.$ Similarly, $i_j(G,\omega) = 0.$

 Suppose that $d \mid j$, but $d \nmid k-j.$ For $A \in \mathcal{I}_{j,k-j}(w(G)),$ we have $\mathfrak{g}^d(A \cap V') \neq A \cap V'$, and thus $\mathcal{I}_{j,k-j}(w(G))^{\mathfrak{g}} = \emptyset.$ Moreover, $d \mid n-j, $ so $\gau{n-j}{k-j}_{q=\omega} = 0.$

 Suppose that $d | j.$ Then $i_j(G,\omega) = |\mathcal{I}_j(G)^{\mathfrak{g}}|$. Let $A \in \mathcal{I}_j(G)^{\mathfrak{g}}.$ Then $\langle \mathfrak{g} \rangle$ acts freely on $V' \setminus A'.$ Moreover, there is a $\langle \mathfrak{g} \rangle$-equivariant bijection $\sigma_A: \{B \in \mathcal{I}_{j,k-j}(G): B \cap V = A\} \to \{B \subseteq V' \setminus A': |B| = k-j \}$ given by $\sigma_A(S) = S \cap V'.$ This second set is isomorphic to $\mathcal{I}_{k-j}(C_{n-j}^0)$, and hence has size $\gau{n-j}{k-j}_{q=\omega}$. Since this latter expression does not depend on $A$, we see that $|\mathcal{I}_{k,j-k}(G)^{\mathfrak{g}}| = i_j(G,\omega)\gau{n-j}{k-j}_{q=\omega}.$
Therefore $\left(\mathcal{I}_{j,k-j}(w(G)), \mathcal{C}, i_j(G,q)\gau{n-j}{k-j} \right)$ exhibits the CSP.
\end{proof}

\begin{example}
Consider the whiskering of $C_6$, which is acted upon by the cyclic group of order six. Let $k=3$, and let $f(q) = \gau{6}{3} + \frac{[6]}{[5]}\gau{5}{1}\gau{5}{2} + \frac{[6]}{[4]}\gau{4}{2}\gau{4}{1}+\frac{[6]}{[3]}\gau{3}{3}.$ When $\omega$ is a root of unity of order two or six, then $f(\omega) = 0$, and when $\omega$ has order three, then $f(\omega) = 4$. Thus $(\mathcal{I}_3(w(C_6)), \mathfrak{C}_6, f(q))$ exhibits the CSP.
\end{example}

\section{CSPs for other graph families}
\label{sec:more}

In this section, we exhibit CSPs for independent sets of other families of graphs.

A \emph{gear graph} $G_n$ is a graph on vertex set $[0,2n]$ obtained from $C_{2n}$ by adding edges between $2n$ and $2i$ for every $i \in [0,n-1].$ The graph $G_3$ is shown in Figure \ref{fig:gear}. The cyclic group $\mathcal{C}_{2n}$ acts freely on $[0,2n-1]$ by rotation. We let $\mathfrak{C}_n$ be the subgroup of index $2$, and we extend the action to $[0,2n]$ by making $2n$ a fixed point. 

\begin{figure}
\begin{center}
\begin{tikzpicture}[scale=1]

\pgfmathtruncatemacro{\n}{6} 
\pgfmathtruncatemacro{\r}{1} 

\def\radius{1}               

\foreach \i in {0,...,\numexpr\n-1} {
        \coordinate (P\i) at ({\radius*cos(360*\i/\n)}, {\radius*sin(360*\i/\n)});
        \node at (P\i) {}; 
}
\foreach \i in {0,...,\numexpr\n-1} {
    \foreach \j in {1,...,\numexpr\r} {
        \pgfmathtruncatemacro{\k}{mod(\i+\j,\n)} 
        \draw (P\i) -- (P\k);
    }
}

\foreach \i in {0,...,\numexpr\n-1} {
    \fill (P\i) circle (3pt) node[shift={(0,0.2)}] {};
}

\coordinate (C) at (0,0);
\node at (C) {};
\fill (C) circle (3pt);

\foreach \i in {0,...,\numexpr\n-1} {
        \pgfmathtruncatemacro{\m}{mod(2*\i, \n)}
        \draw (C) -- (P\m);
}

\end{tikzpicture}
\caption{The gear graph $G_3$.}
\label{fig:gear}
\end{center}
\end{figure}

We see that Lemma \ref{lem:recurse} and Theorem \ref{thm:main} apply. 
\begin{proposition}
Let $\mathfrak{C}_n$ be a cyclic group of order $n$, acting on $G_n$ as described above. 
Then \[\left(\mathcal{I}_k(G_n), \mathfrak{C}_n, \frac{[2n]}{[2n-k]}\gau{2n-k}{k} + \gau{n}{k-1}\right)\] exhibits the CSP.
\end{proposition}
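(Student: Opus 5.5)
The plan is to apply Lemma \ref{lem:recurse} with the fixed point $x = 2n$, the center of the gear graph. Since $\mathfrak{C}_n$ fixes $2n$, the lemma reduces the claim to two separate CSPs. The first is for $\mathcal{I}_k(G_n \setminus 2n)$ with polynomial $a(q) = \frac{[2n]}{[2n-k]}\gau{2n-k}{k}$. The second is for $\mathcal{I}_{k-1}(G_n \setminus N[2n])$ with polynomial $b(q) = \gau{n}{k-1}$. Their sum is exactly the stated polynomial.

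\textbf{First term.} Deleting the center leaves the cycle $C_{2n} = C_{2n}^1$. Theorem \ref{thm:main} with $r=1$ and $n$ replaced by $2n$ (valid when $2k \leq 2n$) gives the CSP
\[\left(\mathcal{I}_k(C_{2n}), \mathfrak{C}_{2n}, \tfrac{[2n]}{[2n-k]}\gau{2n-k}{k}\right).\]
Our group $\mathfrak{C}_n$ is the index-$2$ subgroup generated by rotation by $2$. The CSP condition only asks that $|X^{\mathfrak{h}}| = f(\omega)$ for each group element $\mathfrak{h}$ of order $d$ and each primitive $d$th root of unity $\omega$. An element of $\mathfrak{C}_n$ of order $d$ is also an element of $\mathfrak{C}_{2n}$ of order $d$. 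Hence the condition for the subgroup is a special case of the condition for $\mathfrak{C}_{2n}$, so the CSP restricts to $\mathfrak{C}_n$ verbatim. I will state this restriction observation explicitly.

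\textbf{Second term.} The closed neighborhood $N[2n]$ consists of the center and all even vertices $0,2,\ldots,2n-2$. Deleting it leaves the odd vertices $1,3,\ldots,2n-1$. This set is independent in $C_{2n}$, since neighbours of odd vertices are even, and the center is gone, so the remaining graph is edgeless. The generator of $\mathfrak{C}_n$ sends $2i+1 \mapsto 2i+3 \bmod 2n$. Under the relabeling $2i+1 \mapsto i$ this is exactly rotation of $C_n^0$ on $[0,n-1]$. Theorem \ref{thm:main} with $r=0$ (equivalently Theorem 1.1b of \cite{CSP1}) then gives the CSP
\[\left(\mathcal{I}_{k-1}(C_n^0), \mathfrak{C}_n, \tfrac{[n]}{[n]}\gau{n}{k-1}\right) = \left(\mathcal{I}_{k-1}(C_n^0), \mathfrak{C}_n, \gau{n}{k-1}\right).\]
When $k=1$ this is the trivial case $\gau{n}{0}=1$, counting the empty set.

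\textbf{Conclusion and expected obstacle.} Combining the two CSPs via Lemma \ref{lem:recurse} yields the proposition. The only real obstacle is the range of $k$. Theorem \ref{thm:main} needs $k \leq n$ for the first term, and $k-1 \leq n$ for the second. I would assume $k \leq n$; this is the range in which the formula in the statement is meaningful, since for $k>n$ the factor $\gau{2n-k}{k}$ has negative top index. I would remark on the boundary $k = n+1$ separately if needed. Beyond that, the work is just checking that the subgroup restriction and the relabeling of the odd vertices are equivariant, which is routine.
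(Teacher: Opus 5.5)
Your proposal is correct and matches the paper's argument: the paper only says that Lemma~\ref{lem:recurse} and Theorem~\ref{thm:main} apply at the fixed center $2n$, and you add two details it leaves implicit, namely that the $\mathfrak{C}_{2n}$-CSP for $C_{2n}$ restricts to the index-$2$ subgroup and that $G_n\setminus N[2n]$ is $C_n^0$ with the rotation action. One small slip in your range remark is that for $n<k<2n$ the top index $2n-k$ is positive, so $\gau{2n-k}{k}$ simply vanishes (consistent with $\mathcal{I}_k(C_{2n})=\emptyset$), and the formula only degenerates for $k\ge 2n$.
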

The first polynomial in the Theorem comes from the independent sets of size $k$ of $C_{2n}$, while the second polynomial comes from independent sets of size $k-1$ in $C_n^0.$ These independent sets correspond to independent sets $I$ where $2n \in I.$

\begin{example}
Let $n = 3, k = 3$. Then the cyclic group of order $3$ acts on $G_3$. We see that the orbit generated by $\{0,2,6\}$ is free of size $3$, while $\{0,2,4\}$ and $\{1,3,5\}$ are fixed points.  Let $f(q) = \frac{[6]}{[3]}\gau{3}{3} + \gau{3}{2}$. Then $f(q) = 2+q+q^2+q^3.$ We see that $f(\omega) = 2$ and $f(1) = 5$, as expected, so $\left(\mathcal{I}_2(G_3), \mathcal{C}_3, f(q)\right)$ exhibits the CSP.
\end{example}




\begin{figure}
\begin{center}
\begin{tikzpicture}[scale=1]

\pgfmathtruncatemacro{\n}{6} 
\pgfmathtruncatemacro{\r}{1} 

\def\radius{1}               

\foreach \i in {0,...,\numexpr\n-1} {
        \coordinate (P\i) at ({\radius*cos(360*\i/\n)}, {\radius*sin(360*\i/\n)});
        \node at (P\i) {}; 
}
\foreach \i in {0,...,\numexpr\n-1} {
    \foreach \j in {1,...,\numexpr\r} {
        \pgfmathtruncatemacro{\k}{mod(\i+\j,\n)} 
        \draw (P\i) -- (P\k);
    }
}

\foreach \i in {0,...,\numexpr\n-1} {
    \fill (P\i) circle (3pt) node[shift={(0,0.2)}] {};
}

\foreach \i in {0,...,\numexpr\n-1} {
        \coordinate (Q\i) at ({1.5*\radius*cos(360*\i/\n)}, {1.5*\radius*sin(360*\i/\n)});
        \node at (Q\i) {}; 
        \fill (Q\i) circle (3pt);
        \draw (P\i) -- (Q\i);
}
\coordinate (C) at (0,0);
\node at (C) {};
\fill (C) circle (3pt);

\foreach \i in {0,...,\numexpr\n-1} {
        \pgfmathtruncatemacro{\m}{mod(\i, \n)}
        \draw (C) -- (P\m);
}

\end{tikzpicture}
\caption{The helm graph $H_6$.}
\label{fig:helm}
\end{center}
\end{figure}

The \emph{helm graph} $H_n$ is obtained from $w(C_n)$ by adding one new vertex $x$, which is incident to all the vertices on the cycle $C_n,$ and is not adjacent to any of the pendant vertices. We call this additional vertex the central vertex. The graph $H_6$ is given in Figure \ref{fig:helm}. Let $\mathfrak{C}_n$ be the cyclic group of order $n$, acting on $C_n$ by rotation. Then $\mathfrak{C}_n$ also acts on $w(C_n)$ and $H_n$. For the helm graph, $\mathfrak{C}_n$ fixes the central vertex $x$. We observe that $H_n \setminus N[x] = C_n^0,$ while $H_n \setminus x = w(C_n)$. Applying Theorem \ref{thm:whisker}, Theorem \ref{thm:main}, and Lemma \ref{lem:recurse}, we obtain the following proposition.

\begin{proposition}
Let $\mathfrak{C}_n$ be a cyclic group of order $n$, acting on $C_n$ via rotation. Then $\mathfrak{C}$ acts on $H_n$, and \[\left(\mathcal{I}_k(H_n), \mathfrak{C}_n, \gau{n}{k-1}+\sum_{j=0}^k \frac{[n]}{[n-j]}\gau{n-j}{j}\gau{n-j}{k-j} \right)\] exhibits the CSP.
\end{proposition}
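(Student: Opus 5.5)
The plan is to apply Lemma \ref{lem:recurse} with the central vertex $x$ of $H_n$ as the fixed point. This needs two ingredients: a CSP for $\mathcal{I}_k(H_n\setminus x)$ and a CSP for $\mathcal{I}_{k-1}(H_n\setminus N[x])$. As observed above, $H_n \setminus x = w(C_n)$. Also $N[x]$ consists of $x$ and the $n$ cycle vertices, so $H_n\setminus N[x]$ is the set of $n$ pendant vertices with no edges among them, i.e.\ a copy of $C_n^0$. The rotation action of $\mathfrak{C}_n$ on these pendant vertices agrees with the action $\mathfrak{g}(v')=(\mathfrak{g}v)'$ defined for whiskerings.

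For the second ingredient, apply Theorem \ref{thm:main} with $r=0$ and $k-1$ in place of $k$. This gives that $(\mathcal{I}_{k-1}(C_n^0),\mathfrak{C}_n,\gau{n}{k-1})$ exhibits the CSP, since $\frac{[n]}{[n]}\gau{n}{k-1}=\gau{n}{k-1}$. The hypothesis $(r+1)(k-1)\le n$ is automatic when $k-1\le n$. If $k-1>n$, both sides are $0$. The case $k-1=0$ is trivial: there is one empty set, and the polynomial is $1$.

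For the first ingredient, apply Theorem \ref{thm:whisker} to $G=C_n$, on which $\mathfrak{C}_n$ acts freely by rotation. For each $j\le k$, Theorem \ref{thm:main} with $r=1$ gives that $(\mathcal{I}_j(C_n),\mathfrak{C}_n,\frac{[n]}{[n-j]}\gau{n-j}{j})$ exhibits the CSP, provided $2j\le n$. Theorem \ref{thm:whisker} then yields the CSP
\[\left(\mathcal{I}_k(w(C_n)),\mathfrak{C}_n,\sum_{j=0}^k \frac{[n]}{[n-j]}\gau{n-j}{j}\gau{n-j}{k-j}\right).\]
The $j=0$ term is $\gau{n}{k}$, which matches the convention $f_0=1$ (one empty independent set).

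The main obstacle is the range $2j>n$, which lies outside the hypotheses of Theorem \ref{thm:main}. There $\mathcal{I}_j(C_n)=\emptyset$, so one must check that the chosen $f_j$ vanishes at every $n$th root of unity, or else replace $f_j$ by $0$.
\begin{itemize}
\item If $n-j<j$ and $n-j\ge 1$, then $\gau{n-j}{j}=0$ identically, so $f_j=0$.
\item The only delicate case is $j=n$, where $\frac{[n]}{[0]}$ is undefined. Here I would interpret the summand as $0$, since $\mathcal{I}_n(C_n)$ is empty for $n\ge 2$ and $\gau{0}{k-n}$ vanishes unless $k=n$. I would state this convention explicitly, or restrict to $j<n$.
\end{itemize}
With these conventions every hypothesis of Theorem \ref{thm:whisker} holds.

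Finally, Lemma \ref{lem:recurse} with $a(q)$ the whiskering sum and $b(q)=\gau{n}{k-1}$ gives that $(\mathcal{I}_k(H_n),\mathfrak{C}_n,a+b)$ exhibits the CSP, which is the claimed polynomial.
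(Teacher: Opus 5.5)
Your proposal is correct and follows the paper's own argument: Lemma \ref{lem:recurse} at the fixed central vertex, with Theorem \ref{thm:main} ($r=0$) covering $H_n\setminus N[x]\cong C_n^0$, and Theorem \ref{thm:whisker} (fed by Theorem \ref{thm:main} with $r=1$) covering $H_n\setminus x=w(C_n)$. Your bookkeeping for $2j>n$ is a clarification the paper omits: you take $f_j=0$ there because $\mathcal{I}_j(C_n)=\emptyset$, which also disposes of the undefined $\frac{[n]}{[0]}$ term. The same zero-by-convention reasoning handles $k>n$. In that range Theorem \ref{thm:whisker}'s hypothesis $k\le n$ fails, but $\mathcal{I}_k(w(C_n))=\emptyset$, so nothing is lost.
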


\begin{example}
We let $n = 6$ and $k=3.$ The cyclic group of order $6$ acts on $H_6.$
Consider $f(q) = \gau{6}{2}+\gau{6}{3} + \frac{[6]}{[5]}\gau{5}{1}\gau{5}{2} + \frac{[6]}{[4]}\gau{4}{2}\gau{4}{1}+\frac{[6]}{[3]}\gau{3}{3}.$ When $\omega$ is a root of unity of order six, then $f(\omega) = 0$. We see that $f(-1) = 6,$ and when $\omega$ has order three, then $f(\omega) = 4$. Thus $(\mathcal{I}_3(H_6), \mathfrak{C}_6, f(q))$ exhibits the CSP.
\end{example}

The \emph{book graph} $B_n$ has vertices $[0,n] \times \{+1,-1\}.$ The book graph has edges $(i,j) \sim (n,j)$, and $(i,j) \sim (i,-j)$ for $i \in [0,n-1]$, and $j \in \{-1,1\}.$ We also have $(n,1) \sim (n,-1).$ The book graph $B_4$ is depicted in Figure \ref{fig:book}.

If $\mathfrak{C}$ acts freely on $[0,n-1]$, then $\mathfrak{C}$ acts on $B_n$ as follows: the vertices $(n,-1)$ and $(n,1)$ are fixed points. Otherwise, given $\mathfrak{g} \in \mathfrak{C}, i \in [0,n-1], j \in \{-1, 1\}$, we have $\mathfrak{g} \cdot (i,j) = (\mathfrak{g}\cdot i, j)$.

We see that $B_n \setminus N[(n,1)]$ is isomorphic to $C_n^0$, as is $B_n \setminus N[(n,-1)]$. Finally, $B_n \setminus \{(n,1), (n,-1)\}$ is isomorphic to $w(C^0_n).$ Using Theorem \ref{thm:main}, Theorem \ref{thm:whisker}, and Lemma \ref{lem:recurse}, we can obtain the following proposition.
\begin{proposition}
Let $\mathfrak{C}$ be a cyclic group that acts freely on $[0,n-1].$ Then $\mathfrak{C}$ acts on $B_n$, and \[\left(\mathcal{I}_k(B_n), \mathfrak{C}, 2\gau{n}{k-1}+\sum_{j=0}^k \gau{n}{j}\gau{n-j}{k-j} \right)\] exhibits the CSP.
\end{proposition}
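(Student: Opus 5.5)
We apply Lemma \ref{lem:recurse} twice, once at each of the two fixed points $(n,1)$ and $(n,-1)$. Both vertices are fixed by every element of $\mathfrak{C}$, so Lemma \ref{lem:recurse} applies at either one.

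\textbf{First step: delete $x=(n,1)$.} Lemma \ref{lem:recurse} reduces the problem to two CSPs: one for $\mathcal{I}_k(B_n\setminus x)$ and one for $\mathcal{I}_{k-1}(B_n\setminus N[x])$.

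For the second of these, $N[(n,1)]$ consists of $(n,1)$, $(n,-1)$ and all $(i,1)$ with $i\in[0,n-1]$. Hence $B_n\setminus N[x]$ is the edgeless graph on $\{(i,-1)\}$, and the isomorphism with $C_n^0$ is $\mathfrak{C}$-equivariant. If $\mathfrak{C}$ is all of rotation by $\mathfrak{C}_n$, we use the case $r=0$ of Theorem \ref{thm:main}, namely $\frac{[n]}{[n]}\gau{n}{k-1}=\gau{n}{k-1}$.

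In general $\mathfrak{C}$ only acts freely on $[0,n-1]$. In that case we verify $\gau{n}{k-1}$ directly via Lemma \ref{lem:quotient} and Lemma \ref{lem:eval}. An element of order $d$ has $d\mid n$ and acts freely, so its fixed $(k-1)$-subsets number $\binom{n/d}{(k-1)/d}$ when $d\mid k-1$ and $0$ otherwise. This agrees with $\gau{n}{k-1}_{q=\omega}$.

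\textbf{Second step: delete $y=(n,-1)$ from $G'=B_n\setminus x$.} In $G'$ the vertex $y$ is still fixed, so we apply Lemma \ref{lem:recurse} to $G'$ at $y$. The graph $G'\setminus N[y]$ is the edgeless graph on $\{(i,1)\}$, and as above it contributes $\gau{n}{k-1}$.

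The graph $G'\setminus y$ is the disjoint union of the edges $(i,1)(i,-1)$, which is $w(C_n^0)$ with $V=\{(i,1)\}$ and $V'=\{(i,-1)\}$. The action matches the whiskering action, since $\mathfrak{g}(i,-1)=(\mathfrak{g}i,-1)=(\mathfrak{g}(i,1))'$.

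Since $\mathfrak{C}$ acts freely on $V$, and $(\mathcal{I}_j(C_n^0),\mathfrak{C},\gau{n}{j})$ exhibits the CSP by the same argument as above, Theorem \ref{thm:whisker} applies. It gives the CSP for $\mathcal{I}_k(w(C_n^0))$ with polynomial $\sum_{j=0}^k\gau{n}{j}\gau{n-j}{k-j}$.

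\textbf{Conclusion.} Lemma \ref{lem:recurse} at $y$ gives the CSP for $\mathcal{I}_k(G')$ with polynomial $\gau{n}{k-1}+\sum_j\gau{n}{j}\gau{n-j}{k-j}$. Lemma \ref{lem:recurse} at $x$ then adds a second $\gau{n}{k-1}$, which yields the stated polynomial.

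\textbf{Main obstacle.} The delicate part is applying Theorem \ref{thm:whisker} when $\mathfrak{C}$ is an arbitrary cyclic group acting freely on $[0,n-1]$ rather than the full rotation group. Here the order $|\mathfrak{C}|$ may be smaller than $n$, while the binomial parameter $n$ in the statement is $|V|$. The proof of Theorem \ref{thm:whisker} uses $n=|V(G)|$: it needs $d\mid n$ and freeness on $V'\setminus A'$, both of which hold here. So the reduction is sound, and the only care required is to read the "$n$" in Theorem \ref{thm:whisker} as the number of vertices of $G$.

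Two edge cases are handled by the conventions $\gau{n}{-1}=0$ and $\mathcal{I}_{-1}=\emptyset$: $k=0$ is trivial, and $k=1$ is covered by these conventions.
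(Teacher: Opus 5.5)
Your proof is correct and follows the route the paper indicates. You apply Lemma~\ref{lem:recurse} at $(n,1)$ and then at $(n,-1)$; each closed-neighborhood deletion leaves a copy of $C_n^0$, contributing $\gau{n}{k-1}$ twice, and the remaining graph $B_n\setminus\{(n,1),(n,-1)\}\cong w(C_n^0)$ is handled by Theorem~\ref{thm:whisker} with $f_j=\gau{n}{j}$. Your direct check of the $q$-binomial CSP for an arbitrary free cyclic action, instead of citing Theorem~\ref{thm:main} (which is stated only for rotation by $\mathfrak{C}_n$), is sensible extra care. The edge-case remark is slightly off: $k=1$ only needs $\mathcal{I}_0=\{\emptyset\}$ and $\gau{n}{0}=1$, and $k>n$ (outside the hypothesis $k\le n$ of Theorem~\ref{thm:whisker}) is disposed of by noting that the whiskered set and its polynomial both vanish.
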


\begin{example}
We let $n=4$ and $k=3.$
We see that the cyclic group of order 4 acts on $B_4$. We let $f(q) = 2\gau{4}{2}+\gau{4}{0}\gau{4}{3}+\gau{4}{1}\gau{3}{2}+\gau{4}{2}\gau{2}{1}+\gau{4}{3}$. Then $f(-1) = 4,$ while $f(i) = f(-i) = 0.$ Hence $(\mathcal{I}_3(B_4), \mathcal{C}_4, f(q))$ exhibits the CSP.
\end{example}

\begin{figure}
\begin{center}
\begin{tikzpicture}[scale=1]

\pgfmathtruncatemacro{\n}{4} 
\pgfmathtruncatemacro{\r}{1} 

\def\radius{1}               

\foreach \i in {0,...,\numexpr\n-1} {
        \coordinate (P\i) at ({\radius*cos(360*\i/\n)}, {\radius*sin(360*\i/\n)});
        \node at (P\i) {}; 
}

\foreach \i in {0,...,\numexpr\n-1} {
    \fill (P\i) circle (3pt) node[shift={(0,0.2)}] {};
}

\foreach \i in {0,...,\numexpr\n-1} {
        \coordinate (Q\i) at ({\radius*cos(360*\i/\n)+0.5}, {\radius*sin(360*\i/\n)+0.5});
        \node at (Q\i) {}; 
        \fill (Q\i) circle (3pt);
        \draw (P\i) -- (Q\i);
}
\coordinate (C) at (0,0);
\node at (C) {};
\fill (C) circle (3pt);
\coordinate (D) at (0.5,0.5);
\node at (D) {};
\fill (D) circle (3pt);

\foreach \i in {0,...,\numexpr\n-1} {
        \pgfmathtruncatemacro{\m}{mod(\i, \n)}
        \draw (C) -- (P\m);
        \draw (D) -- (Q\m);
}
\draw (C) -- (D);
\end{tikzpicture}
\caption{The book graph $B_4$.}
\label{fig:book}
\end{center}
\end{figure}

\section{Observations and Open Questions}
The automorphism group of $C_n^r$ is the dihedral group, and thus there are other ways in which $\mathbb{Z}/2\mathbb{Z}$ can act freely on $C_n^r,$ via reflections. In general, for those actions, $(\mathcal{I}_k(C_n^r, \mathbb{Z}/2\mathbb{Z}, i_{k,r,n}(q))$ does not exhibit the CSP.

Finally, we end with a list of questions:
\begin{enumerate}
\item Are there other commonly studied families of graph $G$ where we can find polynomials $i_k(q)$ so that $(G, \mathcal{C}, i_k(q))$ exhibits the CSP? Examples of such graphs include the prism graphs, for instance.
\item Are there other graph constructions where we can extend instances of CSP from a smaller graph to a larger graph? For example, if $\mathfrak{G}$ acts on a graph $G$, then it also acts on the bipartite double cover of $G$, and on the line graph of $G.$
\item Are there other types of graph structures, like dominating sets, for which we can find instances of the CSP?
\item The powers of cycles $C_n^r$ are examples of circulant graphs, which are graphs where the automorphism group contains a cyclic subgroup $\mathfrak{C}$ which acts transitively on $V(G)$. Given a circulant graph $G$, is there an interesting statistic $s_{G,k}$ on $\mathcal{I}_k(G)$ such that \[\left(\mathcal{I}_k(G), \mathfrak{C}, \sum\limits_{A \in \mathcal{I}_k(G)} q^{s_{G,k}(A)}\right)\]exhibits the CSP?
\end{enumerate}

\end{document}